\newtheorem{theorem}{Theorem}[section]
\newtheorem{lem}[theorem]{Lemma}
\newtheorem{prop}[theorem]{Proposition}
\theoremstyle{remark}
\theoremstyle{definition}
\newtheorem{conj}[theorem]{Conjecture}
\numberwithin{equation}{section}
\newcommand{\abs}[1]{\left\lvert#1\right\rvert}
\newcommand{\set}[1]{\left\lbrace#1\right\rbrace}
\newcommand{\setst}[2]{\left\lbrace#1\ \middle|\ #2\right\rbrace}
\newcommand{\ab}{\allowbreak}
\DeclareMathOperator{\ex}{\mathit{\lambda}}
\DeclareMathOperator{\exl}{\mathit{\lambda}_{\rm local}}
\DeclareMathOperator{\exlin}{\mathit{\lambda}_{\rm local(in)}}
\DeclareMathOperator{\pil}{\mathit{\lambda}_{\rm local}}
\DeclareMathOperator{\pili}{\mathit{\lambda}_{\rm local(in)}}
\DeclareMathOperator{\pilin}{\mathit{\lambda}_{\rm local(in)}}
\DeclareMathOperator{\Gin}{\mathit{G}_{v({\rm in})}}
\DeclareMathOperator{\gin}{\mathit{g}_{v({\rm in})}}
\DeclareMathOperator{\pilout}{\mathit{\lambda}_{\rm local(out)}}
\DeclareMathOperator{\pilo}{\mathit{\lambda}_{\rm local(out)}}
\DeclareMathOperator{\Gout}{\mathit{G}_{v({\rm out})}}
\DeclareMathOperator{\gout}{\mathit{g}_{v({\rm out})}}
\DeclareMathOperator{\wt}{wt}
\DeclareMathOperator{\density}{d}
\DeclareMathOperator{\degree}{d}
\newcommand{\M}{\ensuremath{\mathscr{M}}}
\newcommand{\floor}[1]{\lfloor#1\rfloor}
\newcommand{\ceil}[1]{\left\lceil#1\right\rceil}
\newcommand{\ds}{\displaystyle}
\begin{document}
\title{Inducibility in the hypercube}
\author{John Goldwasser$^\dagger$}
\address{$^\dagger$West Virginia University}
\email{jgoldwas@math.wvu.edu}
\author{Ryan Hansen$^\dagger$}
\email{rhansen@math.wvu.edu}
%
\begin{abstract}{
Let $Q_d$ be the hypercube of dimension $d$ and let $H$ and $K$ be subsets of the vertex set $V(Q_d)$, called configurations in $Q_d$.  We say that $K$ is an \emph{exact copy} of $H$ if there is an automorphism of $Q_d$ which sends $H$ onto $K$.  Let $n\geq d$ be an integer, let $H$ be a configuration in $Q_d$ and let $S$ be a configuration in $Q_n$.  We let $\ex(H,d,n)$ be the maximum, over all configurations $S$ in $Q_n$, of the fraction of sub-$d$-cubes $R$ of $Q_n$ in which $S\cap R$ is an exact copy of $H$, and we define the $d$-cube density $\lambda(H,d)$ of $H$ to be the limit as $n$ goes to infinity of $\ex(H,d,n)$.  We determine $\lambda(H,d)$ for several configurations in $Q_3$ and $Q_4$ as well as for an infinite family of configurations.  There are strong connections with the inducibility of graphs.
}
\end{abstract}
\maketitle
\section{Introduction} 
\label{sec:introduction}

	The $n$-cube, which we denote by $Q_n$, is the graph whose vertex set $V_n=V(Q_n)$ is the set of all binary $n$-tuples, with two vertices adjacent if and only if they differ in precisely one coordinate (so Hamming distance 1). Let $[n]=\set{1,2,\ldots,n}$.  We sometimes denote a vertex $(x_1,x_2,\ldots,x_n)$ of $Q_n$ by the subset $S$ of $[n]$ such that $i\in S$ if and only if $x_i=1$. So if $n=4$, then $\emptyset$ denotes $( 0 0 0 0)$, and $\set{1,3}$ or 13, denotes $( 1 0 1 0)$ and $\set{\set{1},\set{1,3}}$ (or $\set{1,13}$) denotes $\set{( 1 0 0 0 ), ( 1 0 1 0 )}$.  The weight of a vertex is the number of 1s.  For each positive integer $d$ less than or equal to $n$, $Q_n$ has $\binom{n}{d}2^{n-d}$ subgraphs which are isomorphic to $Q_d$ ($d$ coordinates can vary, while $n-d$ coordinates are fixed).  We sometimes refer to each of the $d$ coordinates that can vary as a \emph{flip bit}.  A single flip bit determines an edge while a set of $d$ flip bits, with the other $n-d$ coordinates fixed, determines a particular sub-$d$-cube.
	
	Let $H$ and $K$ be subsets of $V_d$ (we call $H$ and $K$ configurations in $Q_d$).  We say $K$ is an \emph{exact copy} of $H$ if there is an automorphism of $Q_d$ which sends $H$ onto $K$.  For example, $\set{\emptyset,12}$ is an exact copy of $\set{2,123}$ in $Q_3$, but $\set{2,13}$ is not (the vertices are distance 3 apart). So if $K$ is an exact copy of $H$ then they induce isomorphic subgraphs of $Q_d$, but the converse may not hold.
	
	Let $d$ and $n$ be positive integers with $d\leq n$, let $H$ be a configuration in $Q_d$ and let $S$ be a configuration in $Q_n$.  We let $G(H,d,n,S)$ denote the number of sub-$d$-cubes $R$ of $Q_n$ in which $S\cap R$ is an exact copy of $H$ and define $g(H,d,n,S)$ by
	\[
		g(H,d,n,s) =\frac{G(H,d,n,S)}{\binom{n}{d}2^{n-d}}
	\]
	So $g(H,d,n,S)$ is the fraction of all sub-$d$-cubes whose intersection with $S$ is an exact copy of $H$.  We define $\ex(H,d,n)$ to be the maximum of $g(H,d,n,S)$ over all configurations $S$ in $Q_n$.  Note that $\lambda(H,d,n)$ is the average of $2n$ densities $g(H,d,n-1,S_j)$, each of them the fraction of sub-$d$-cubes $R$ in a sub-$(n-1)$-cube of $Q_n$ in which $R\cap S_j$ is an exact copy of $H$, where $S_j$ is the intersection of a maximizing configuration in $Q_n$ with one of the $2n$ sub-$(n-1)$-cubes.  Hence $\ex(H,d,n)$ is the average of $2n$ densities, each of them less than or equal to $\ex(H,d,n-1)$, which means $\ex(H,d,n)$ is a nonincreasing function of $n$, so we can define the $d$-cube density $\lambda(H,d)$ of $H$ by
	\[
		\ex(H,d)=\lim_{n\to\infty} \ex(H,d,n).
	\]
So $\lambda(H,d)$ is the limit as $n$ goes to infinity of the maximum fraction, over all $S\subseteq V_n$, of ``good'' sub-$d$-cubes -- those whose intersection with $S$ is an exact copy of $H$.

In \cite{perfectpathpaper} we initiated the investigation of $d$-cube-density.  There have been many papers on Tur\'{a}n and Ramsey type problems in the hypercube.  There has been extensive research on the minimum fraction of edges in $Q_n$ one can take with no cycle of various lengths \cite{Chung:1992,Conlon:2010,Furedi:2015,Rahman} and a few papers on vertex Tur\'{a}n problems in $Q_n$ \cite{kostochka1976piercing,Johnson:2010,Johnson:1989cy}. There has also been extensive work on which monochromatic cycles must appear in any edge coloring of a large hypercube with a fixed number of colors \cite{Alon:2006,Axenovich:2006,Chung:1992,Conder:1993} and a few resultls on which vertex structures must appear \cite{Goldwasser:2012}.  In \cite{AKS:2006} and \cite{group_paper}, results were obtained on \emph{polychromatic colorings} of $Q_n$.  These are edge colorings with $p$ colors such that every sub-$d$-cube contains every color.

We wanted to investigate a different extremal probelm in the hypercube: the maximum density of a certain kind of sub-structure.  Instead of using graph isomorphism to determine if two substructures are the same, it seemed to capture the essence of the $n$-cube better if the small structures were ``rigid'' within a sub-$d$-cube, and that is what motivated our definitions of exact copy of a configuration and $d$-cube density.  While $d$-cube density is not the same thing as graph inducibility, there are similarities between the two notions and, in fact, we use several results on inducibility (see Section \ref{sec:inducibility}) in our proofs.

In \cite{perfectpathpaper} we used a kind of \emph{blow--up} to show that $\lambda(H,d)\geq\frac{d!}{d^d}$ for every configuration $H$ in $Q_d$.  We defined a perfect $2d$-cycle $C_{2d}$ in $Q_d$ to be a cycle with $d$ pairs of vertices each Hamming distance $d$ apart.  We showed that $\lambda(C_8,4)=\frac{4!}{4^4}$, achieving the smallest possible value for any configuration in $Q_4$.  We also showed $\lambda(P_4,3)=\frac{3}{8}$ where $P_4$ is the induced path in $Q_3$ with 4 vertices.

Finding $d$-cube density seems to be very difficult even for most small configurations.  In this paper we find the $d$-cube density for three configurations in $Q_3$, two configurations in $Q_4$, and for an infinite family of configurations, $d-1$ of them in $Q_d$ for each $d\geq 2$.  We find a construction to produce a lower bound and then find a matching upper bound by using known results on the inducibility of small graphs to show the local density cannot be larger. For all configurations in $Q_2$ and $Q_3$ for which we could not determine the $d$-cube density, we give a construction to produce a lower bound and give an upper bound computed by Rahil Baber \cite{Baber:2014p} using flag algebras.

In Section \ref{sec:local_d_cube_density} we define local $d$-cube density, the notion we use to find the upper bounds.  In Section \ref{sec:configurations_in_q_2} we consider the possible configurations in $Q_2$.  In Section \ref{sec:inducibility} we summarize the results on inducibility of graphs which we will need.  In Section \ref{sec:layered_configurations} we discuss \emph{layered} configurations in $Q_d$, those that are defined in terms of the weights of the $d$-vectors.  In Section \ref{sec:an_infinite_family} we find $d$-cube density for a nontrivial infinite family of configurations.  In Section \ref{sec:configurations_in_q_3_} we consider $d$-cube density for configurations in $Q_3$, and in Section \ref{sec:configurations_in_q_4} we determine the $d$-cube density for two configurations in $Q_4$.  In Section \ref{sec:open_problems_and_conjectures} we discuss some open problems and state some conjectures.


\section{Local $d$-cube density} 
\label{sec:local_d_cube_density}

	Let $H$ be a configuration in $Q_d$ and $S$ be a a configuration in $Q_n$.  For each vertex $v\in S$, we let $\Gin(H,d,n,S)$ be the number of sub-$d$-cubes $R$ of $Q_n$ containing $v$ in which $S\cap R$ is an exact copy of $H$ and $\gin(H,d,n,S)=\frac{\Gin(H,d,n,S)}{\binom{n}{d}}$.  So $\gin(H,d,n,S)$ is the fraction of sub-$d$-cubes containing $v$ which have an exact copy of $H$.  We define $\pilin(H,d,n)$ to be the maximum of $\gin(H,d,n,S)$ over all $v$ and $S$ where $v\in S\subseteq V_n$.  As with $\ex(H,d,n)$, a simple averaging argument shows that $\pilin(H,d,n)$ is a nonincreasing function of $n$, so we define $\pilin(H,d)$ by
	\[
		\pilin(H,d)=\lim_{n \to \infty}\exlin(H,d,n)
	\]
	For each $v\not\in S$, we perform a similar procedure to define $\Gout(H,d,n,S)$, $\gout(H,d,n,S)$, and $\pilout(H,d)$.  So $\pilin(H,d)$ and $\pilout(H,d)$ are the maximum local densities of sub-$d$-cubes with an exact copy of $H$ among all sub-$d$-cubes containing $v$ in $S$ and out of $S$ respectively.  Finally, we define $\pil(H,d)$ to be $\max\{\pilin(H,d),\pilout(H,d)\}$.  Since the global density cannot be more than the maximum local density, we must have $\lambda(H,d)\leq \pil(H,d)$.
	
	We refer to the following type of construction as a \emph{partition-modular construction}.  These are constructions generated by choosing a partition of $[n] = A_1\cup A_2\cup\cdots\cup A_i$ and taking as $S$ the set of vertices such that their binary $n$-tuples satisfy a chosen set of congruences for the weight of the coordinates within the parts.  Sometimes we denote this as a list of $i$-tuples along with a list of their moduli for convenience. For example, the partition $A_1\cup A_2$ taking 01 mod $(2,2)$ would indicate the partition $[n]= A_1\cup A_2$ with $S$ being all vertices with weight 0 mod 2 in $A_1$ and weight 1 mod 2 in $A_2$.  The fractional sizes of the $A_i$ which maximize the number of $Q_d$s having the configuration may also be indicated.
	
	Note that the sets in the partition may be of any sizes, however, when $i=1$ we call such a configuration \emph{layered} since it is equivalent to choosing all vertices of particular weights modulo $a$ (i.e. entire ``levels'' of $Q_n$).


\section{Configurations in $Q_2$} 
\label{sec:configurations_in_q_2}

	It is obvious that $\lambda(H,d)=\lambda\left(\overline{H},d\right)$ where $\overline{H}$ is the complement of the configuration $H$ in $Q_d$, so we may restrict our consideration to only one configuration in each of the complementary pairs.

	Sketches of all of the configurations in $Q_2$, subject to the above restriction, are given in Figure \ref{fig:Q2}.  In the figure, red vertices are in the configuration and open blue are not.  The edges have been added for emphasis, but configurations are sets of vertices.

\begin{figure}[htbp]
	\centering
	\begin{tikzpicture}[scale=4,inner sep=.85mm]
		\foreach \i in {1,2,...,4} {
		\node (a\i) at (\i,0) {};
		\node (b\i) at (\i+.5,0) {};
		\node (c\i) at (\i+.5,.5) {};
		\node (d\i) at (\i,.5) {};
		\node at (\i+.25,-.25) {$Z_\i$};
		}
		\foreach \i in {1,...,4} {
			\draw [dashed, blue] (a\i) rectangle (c\i);
		}
		\foreach \i in {1,...,4} {
		\foreach \j in {a,b,c,d} {
			\filldraw[thick,draw=blue,fill=white] (\j\i) circle (.15mm);
		}}
		\foreach \i in {2,3} {
		\node[circle,fill=red] () at (\i,0) {};
		}
		\node[circle,fill=red] () at (4,.5) {};
		\node[circle,fill=red] () at (4.5,0) {};
		\node[circle,fill=red] () at (3,.5) {};
		
		\draw [ultra thick,red] (a3) -- (d3);
	\end{tikzpicture}
	\caption{Configurations in $Q_2$.}
	\label{fig:Q2}
\end{figure}
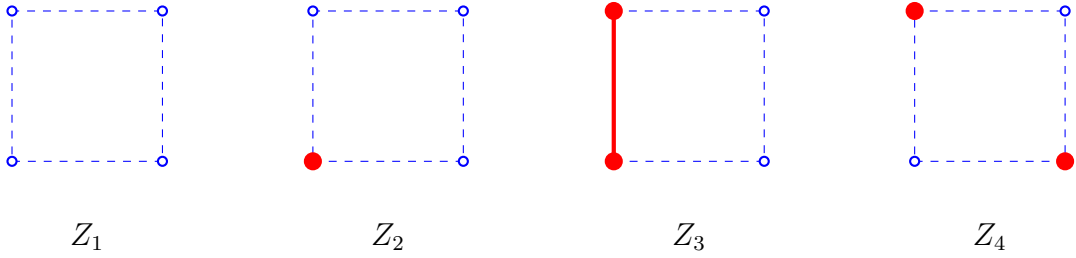

\subsection{Lower Bounds by Construction} 
\label{subsec:lower_bounds_by_construction}

Clearly $\lambda(Z_1,2)=1$, since we would simply take $S=\emptyset$.  To show $\lambda(Z_4)=1$, we take $S$ to be the layered configuration $0\mod 2$ in $Q_n$ (all vertices with even weight).

We can find a lower bound for $Z_2$ by considering the layered configuration in $Q_n$ given by $0\mod 3$.  This gives $\frac{2}{3}\leq \lambda(Z_2,2)$ since any sub-2-cube with the smallest weight of a vertex congruent to 1 or 2 mod 3 has an exact copy of $Z_2$.  For an upper bound we have Baber's flag algebra computation $.685714$.  We remark that this is close to $\frac{24}{35}$, though this may reflect the flag algebra construction rather than the actual value.

A construction for $Z_3$ is given by considering $[n]=A\cup B$ and then taking $S$ to be the set of all vertices given by binary $n$-tuples with weight $0\mod 2$ in $A$.  This gives a ``good'' $Q_2$ for each $Q_2$ with one flip bit in each of $A$ and $B$.  If we take $\abs{A}=\floor{\frac{n}{2}}$ and $\abs{B}=\ceil{\frac{n}{2}}$, this results in $\floor{\frac{n}{2}}\ceil{\frac{n}{2}}\approx \frac{n^2}{4}$ many ``good'' $Q_2$s, which shows $\frac{1}{2}\leq \lambda(Z_3,2)$.

Table \ref{bestQ2s} summarizes the best results obtained in $Q_2$.
\begin{table}[htbp]
	\centering
		\begin{tabular}{|c|c|c|c|}
			\hline
		Configuration & Construction & Lower Bound & Upper Bound\\
			\hline\hline
		$Z_1$ & $\emptyset$ & 1 & 1\\
		\hline
		$Z_2$ & Layered: $0$ mod $3$ & $2/3$ & $.685714286$\\
		\hline
		$Z_3$ & $A\cup B$ taking $0$ mod $2$ in $A$ & $1/2$ & $1/2$ (Theorem \ref{thm:V_3_in_Q_2})\\
		\hline
		$Z_4$ & Layered: $0$ mod $2$ & 1 & 1\\
		\hline

		\end{tabular}
	\caption{Summary of the best results for configurations in $Q_2$.}
	\label{bestQ2s}
\end{table}


\subsection{Upper Bounds} 
\label{sec:upper_bounds_q_2}

In order to show that $\lambda(Z_3,2)=\frac{1}{2}$, we use an argument that will be applied, in a slightly more general form, to an infinite family of configurations in Section \ref{sec:an_infinite_family}.

\begin{theorem}\label{thm:V_3_in_Q_2}
		$\lambda(Z_3,2)=\frac{1}{2}$.
	\begin{proof}
		We showed $\lambda(Z_3,2)\geq \frac{1}{2}$ in Section \ref{subsec:lower_bounds_by_construction}. Let $v$ be a vertex in $S$ and let $S_v=S\cap N(v)$, where $N(v)$ is the neighborhood of $v$ in $Q_n$.  A $Q_2$ containing $v$ can be ``good'' only if one of the two vertices adjacent to $v$ is in $S_v$ and one is not.  Hence
		\[
			\gin(Z_3,2,n,S)\leq \frac{\abs{S_v}\left(n-\abs{S_v}\right)}{\binom{n}{2}}\leq \frac{\ceil{\frac{n}{2}}\floor{\frac{n}{2}}}{\binom{n}{2}}
		\]
		and
		\[
			\pilin(Z_3,2)\leq \lim_{n \to \infty}\frac{\ceil{\frac{n}{2}}\floor{\frac{n}{2}}}{\binom{n}{2}} = \frac{1}{2}.
		\]
		
		Since $Z_3$ is self-complementary in $Q_2$, $\frac{1}{2} \leq \lambda(Z_3,2) \leq \pilout(Z_3,2)\ab = \pilin(Z_3,2)\ab = \exl(Z_3,2) \leq \frac{1}{2}$.
	\end{proof}
\end{theorem}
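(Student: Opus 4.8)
The plan is to prove the two matching bounds $\tfrac12\le\lambda(Z_3,2)\le\tfrac12$. The lower bound $\lambda(Z_3,2)\ge\tfrac12$ is already in hand from the partition-modular construction ($A\cup B$ taking $0$ mod $2$ in $A$) in Section \ref{subsec:lower_bounds_by_construction}, so the real work is the upper bound. For that I would not attempt to bound the global density directly but instead pass to local density, using the inequality $\lambda(H,d)\le\pil(H,d)=\max\{\pilin(H,d),\pilout(H,d)\}$ established in Section \ref{sec:local_d_cube_density}; this reduces the whole question to a counting problem around a single vertex.

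First I would exploit symmetry to cut the work in half. Since $Z_3$ is a pair of adjacent vertices of $Q_2$, its complement in $Q_2$ is again a pair of adjacent vertices, hence an exact copy of $Z_3$; so $Z_3$ is self-complementary. Replacing a configuration $S$ by $V_n\setminus S$ turns each good sub-$2$-cube through a vertex $v\in S$ into a good sub-$2$-cube through the same vertex $v\notin V_n\setminus S$, and vice versa, which gives $\pilin(Z_3,2)=\pilout(Z_3,2)$. Thus it suffices to bound $\pilin(Z_3,2)$, and I may fix $v\in S$ and count only the good sub-$2$-cubes containing $v$.

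Next I would set up the local count. A sub-$2$-cube $R$ through $v$ is specified by a pair of flip bits $\{i,j\}\subseteq[n]$; its four vertices are $v$, the two neighbors $v\oplus e_i$ and $v\oplus e_j$, and the vertex at Hamming distance $2$ from $v$. For $S\cap R$ to be an exact copy of $Z_3$ with $v\in S$, the copy must consist of $v$ together with exactly one of its two neighbors in $R$ (a distance-$2$ pair is not an edge), so a \emph{necessary} condition is that exactly one of $v\oplus e_i,\ v\oplus e_j$ lies in $S$. Writing $S_v=S\cap N(v)$ and summing this necessary condition over all coordinate pairs, a good pair must use one flip bit leading into $S_v$ and one leading out, so
\[
\Gin(Z_3,2,n,S)\le \abs{S_v}\bigl(n-\abs{S_v}\bigr).
\]

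Finally I would optimize and take the limit. Dividing by $\binom{n}{2}$ and maximizing $x(n-x)$ over the integer $x=\abs{S_v}$ (the maximum occurs at $x=\floor{n/2}$) yields
\[
\gin(Z_3,2,n,S)\le \frac{\ceil{n/2}\floor{n/2}}{\binom{n}{2}}\longrightarrow \frac12\quad(n\to\infty),
\]
whence $\pilin(Z_3,2)\le\tfrac12$, and with the symmetry above $\lambda(Z_3,2)\le\pil(Z_3,2)=\tfrac12$. This theorem has no deep obstacle; the one place to be careful is the reduction itself. I am discarding the extra requirement that the distance-$2$ vertex of $R$ lie outside $S$, so $\abs{S_v}(n-\abs{S_v})$ is only an upper bound on $\Gin$, and I must make sure this relaxation still forces the limit down to exactly $\tfrac12$ rather than something larger. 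That it does is confirmed by the matching construction, which attains $\tfrac12$ and hence pins the value precisely. The only genuine subtlety is recognizing self-complementarity so that the ``out'' case comes for free rather than requiring a separate analysis.
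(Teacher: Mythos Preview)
Your proposal is correct and follows essentially the same approach as the paper: both reduce to the local density via $\lambda\le\pil$, use self-complementarity of $Z_3$ to identify $\pilin$ with $\pilout$, observe that a good $Q_2$ through $v\in S$ requires exactly one neighbor in $S$, and bound the count by $\abs{S_v}(n-\abs{S_v})\le\lceil n/2\rceil\lfloor n/2\rfloor$. Your explicit remark that the distance-$2$ condition is being discarded is a nice clarification the paper leaves implicit, but the argument is otherwise identical.
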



\section{Inducibility} 
\label{sec:inducibility}

There are strong connections between $d$-cube density and \emph{inducibility} of a graph.  Given graphs $G$ and $H$, with $\abs{V(G)}=n$ and $\abs{V(H)}=k$, the \emph{density} of $H$ in $G$, denoted $\density_H(G)$, is defined by
\[
	\density_H(G)=\frac{\textrm{\# induced copies of $H$ in $G$}}{\binom{n}{k}}
\]
Pippinger and Golumbic \cite{PG:1975} defined the inducibility $i(H)$ of $H$ by
\[
	i(H)=\lim_{n \to \infty} \max_{\abs{V(G)}=n} \degree_H(G)
\]
Clearly $i(H)=i(\overline{H})$ where $\overline{H}$ is the complement of $H$.  We summarize a few inducibility results, some of which we will use to prove upper bounds for $d$-cube density.
\begin{enumerate}
	\item\label{K12} $i(K_{1,2})=\frac{3}{4}$.  The optimizing graph $G$ is $K_{\frac{n}{2},\frac{n}{2}}$.  That it cannot be larger than $\frac{3}{4}$ follows immediately from a theorem of Goodman \cite{G:1985} that says that in any 2-coloring of the edges of $K_n$, at least $\frac{1}{4}$ (asymptotically) of the $K_3$s are monochromatic.
	\item $i(K_{2,2})=\frac{3}{8}$.  In \cite{Bollobas:1986bfa}, Bollob\'{a}s et. al. showed that the graph on $n$ vertices which has the most induced copies of $K_{r,r}$, for any $r\geq 2$, is $K_{\left\lceil \frac{n}{2}\right\rceil,\left\lfloor\frac{n}{2}\right\rceil}$.
	\item Since $2K_2$ is the complement of $K_{2,2}$, $i(2K_2)=\frac{3}{8}$.  In \cite{perfectpathpaper}, Goldwasser and Hansen showed that the bipartite graph on $n$ vertices with the most induced copies of $2K_2$ is two disjoint copies of $K_{\frac{n}{4},\frac{n}{4}}$ (with the obvious modification if $n$ is not divisible by 4).  This graph has $\frac{3}{32}$ of its subgraphs induced by 4 vertices isomorphic to $2K_2$.  They needed this result to show that $\ex(C_8,4)=\frac{3}{32}$ where $C_8$ is the ``perfect'' 8-cycle.
	\item\label{K13} $i(K_{1,3})=\frac{1}{2}$.  In \cite{BS:1994}, Brown and Siderenko showed that the graph on $n$ vertices which has the most induced copies of $K_{r,s}$, for any $r,s$ (except $r=s=1$), is complete bipartite.  The optimizing graph for $K_{1,3}$ is asymptotically, but not exactly, equibipartite; the sizes of the parts are roughly $\frac{n}{2}\pm\frac{\sqrt{3n}}{2}$.
	\item In \cite{Hirst:2014}, Hirst used flag algebras to show that $i(K_{1,1,2})=\frac{72}{125}=.576$ and $i(K_{\rm{PAW}})=\frac{3}{8}$ where $K_{1,1,2}$ is $K_4$ minus an edge and $K_{\rm{PAW}}$ is $K_3$ plus a pendant edge, leaving the path $P_4$ as the only graph on 4 vertices whose exact inducibility has yet to be determined.
	\item In \cite{EL:2015}, Even-Zohar and Linial improve earlier best bounds \cite{Exoo:1986,V:2013} for $i(P_4)$ and find the inducibility of some graphs on 5 vertices.
\end{enumerate}


\section{Layered Configurations} 
\label{sec:layered_configurations}

Recall that we say a configuration $H$ in $Q_d$ is \emph{layered} if it is an exact copy of a configuration $K$ in $Q_d$ such that $v\in K$ if and only if $\wt(v)\in W_H$ for some subset $W_H$ of $[0,d]$.  For example, $H=\{1001,\ab 1110,\ab 0010,\ab 0100,\ab 0111\}$ is layered because there is an automorphism of $Q_4$ (interchange 0 and 1 in the 2$^{\rm{nd}}$ and 3$^{\rm{rd}}$ coordinates) which maps $H$ onto $K=\{1111,\ab 1000,\ab 0100,\ab 0010,\ab 0001\}$and $K=\{v\in Q_4 : \wt(v)=1\textrm{ or }4\}$.  We call $K$ a \emph{canonical layered configuration}.  

If $W_H$ is the set of weights for a canonical layered configuration $H$ in $Q_d$, and if $j+2\in W_H$ if and only if $j\in W_H$ for all $j\in[0,d-2]$, then $H$ is either all vertices, none of the vertices, all even weight vertices, or all odd weight vertices in $Q_d$.  We call these four types of configurations trivial. Each trivial configuration has $d$-cube density equal to 1 (using the obvious layered configuration $S$ in $Q_n$).  The configurations $W_1, W_3, W_7, W_8, W_{12}$ and $W_{14}$ (and their complements) are layered configurations in $Q_3$, with $W_1$ and $W_{14}$ trivial (See Figure \ref{fig:Q3} and Table \ref{bestQ3s}).

One can get a good lower bound construction for any layered configuration in $Q_d$ by using an appropriate layered configuration $S$ in $Q_n$. For example, if we represent the configuration $W_8$ by $H=\{110,101,011\}$, we define $S$ by $S=\{v\in V_n : \wt(v)\equiv 2\mod 3\}$.  Any sub-3-cube of $Q_n$ whose smallest weight vertex has weight congruent to 0 or 1 mod 3 is ``good'', showing that $\lambda(W_8,3)\geq\frac{2}{3}$.  Baber's flag algebra upper bound is $.66666666675$ so undoubtedly $\lambda(W_8,3)=\frac{2}{3}$, but we have not proved it.


\begin{theorem}\label{thm:local_trivial_layered}
	If $H$ is a configuration in $Q_d$, then $\pil(H,d)=1$ if and only if $H$ is layered.
\end{theorem}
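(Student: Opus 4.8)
The plan is to prove both directions of the biconditional separately, with the ``if'' direction being relatively routine and the ``only if'' direction being the main challenge.

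\textbf{The ``if'' direction.} Suppose $H$ is layered, so $H$ is an exact copy of a canonical layered configuration $K$ with weight set $W_H \subseteq [0,d]$. First I would establish a lower bound $\pil(H,d) \geq 1$ by exhibiting a layered construction $S$ in $Q_n$ that is locally optimal. The natural candidate generalizes the $W_8$ example from Section \ref{sec:layered_configurations}: partition the integers $[0,n]$ into residue classes and choose $S = \{v \in V_n : \wt(v) \in W\}$ for an appropriate periodic set $W$ of weights, designed so that (almost) every sub-$d$-cube through a fixed vertex $v$ meets $S$ in an exact copy of $H$. The key observation is that in a layered configuration, whether a vertex lies in $H$ depends only on its weight, and within a sub-$d$-cube the weights of the $2^d$ vertices are determined by the weight of the minimal vertex plus the flip-bit pattern; so if $S$ is layered with the right period, the intersection pattern is the same in essentially every sub-$d$-cube. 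I would argue that the local density through $v$ tends to $1$ as $n \to \infty$ (the exceptional sub-$d$-cubes, those near a ``wrap-around'' in the weight period, form a vanishing fraction). Since $\pil(H,d) \leq 1$ always holds, this gives $\pil(H,d)=1$.

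\textbf{The ``only if'' direction.} This is the contrapositive: if $H$ is \emph{not} layered, then $\pil(H,d) < 1$. This is where the real work lies. Fix a vertex $v$ and a configuration $S$ with $v \in S$ (the $v \notin S$ case is symmetric, or handled by complementation since $\lambda(\overline H,d)=\lambda(H,d)$ and complementation preserves non-layeredness). The strategy is to find two sub-$d$-cubes through $v$ that are ``forced'' to disagree: I want to show that no matter how $S$ is chosen, a positive fraction of the sub-$d$-cubes through $v$ must fail to be exact copies of $H$. The natural mechanism is a local swap or averaging argument. Because $H$ is not layered, there exist two vertices $x, y$ of $Q_d$ with $\wt(x) = \wt(y)$ but with exactly one of $x,y$ in $H$ (this is essentially the definition of non-layered, up to choosing the right automorphism representative). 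Consider pairs of sub-$d$-cubes through $v$ that share the same multiset of flip bits but route them to opposite orientations; since membership of a vertex $u \in Q_n$ in $S$ is a fixed bit, one cannot simultaneously make two sub-$d$-cubes related by such a symmetry both be exact copies of $H$ when $H$ distinguishes equal-weight vertices.

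\textbf{Main obstacle.} The hard part is making the ``only if'' direction quantitative and robust against \emph{all} configurations $S$, not just layered ones. The cleanest route I foresee is a counting/averaging argument: group the $\binom{n}{d}$ sub-$d$-cubes through $v$ into orbits under a symmetry group (permuting the $n$ coordinates fixes $v$ and acts on the flip-bit choices), and show that within each orbit the fraction of exact copies of $H$ is bounded away from $1$ by a constant depending only on $H$ and $d$ but not on $n$. The subtlety is that $S$ is \emph{not} assumed symmetric, so I cannot directly quotient by the symmetry; instead I expect to average over the symmetry group and argue that the \emph{average} local density is strictly less than $1$ whenever $H$ is non-layered, then conclude that the supremum $\pil(H,d)$ is strictly less than $1$. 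Controlling the error terms and verifying that the constant does not drift to $1$ as $n \to \infty$ — i.e. proving a uniform gap — is the delicate technical point, and I would expect the proof to lean on the observation that a single equal-weight distinguishing pair $(x,y)$ already forces a fixed positive density of bad sub-$d$-cubes via a pigeonhole on the ambient bits.
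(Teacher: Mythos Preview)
Your ``if'' direction is workable but overcomplicated. You do not need a periodic weight set or any asymptotic argument: simply take $S=\{v\in V_n:\wt(v)\in W_H\}$ and look at the vertex $\emptyset$. Every sub-$d$-cube through $\emptyset$ has $\emptyset$ as its minimum-weight vertex, so its vertices have weights exactly $0,1,\ldots,d$, and $D\cap S$ is the canonical layered copy of $H$ on the nose. Thus $\gin(H,d,n,S)=1$ (or $\gout=1$ if $0\notin W_H$) for every $n\geq d$; there are no exceptional cubes and no wrap-around. This is the paper's argument.

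Your ``only if'' direction has a genuine gap: the symmetry/averaging mechanism you describe does not do anything. Permuting the $n$ coordinates fixes $\emptyset$ and acts \emph{transitively} on the $\binom{n}{d}$ sub-$d$-cubes through $\emptyset$, so there is a single orbit; and since each $\sigma\in S_n$ is an automorphism of $Q_n$ fixing $\emptyset$, one has $g_{\emptyset(\mathrm{in})}(H,d,n,\sigma(S))=g_{\emptyset(\mathrm{in})}(H,d,n,S)$ for every $\sigma$, so averaging over the group returns the original value. The phrase ``share the same multiset of flip bits but route them to opposite orientations'' does not describe two distinct sub-$d$-cubes: a sub-$d$-cube through $v$ is determined by its \emph{set} of flip bits, full stop. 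And even granting an equal-weight distinguishing pair in some representative of $H$, ``exact copy'' quantifies over all automorphisms of $Q_d$, including bit-flips that move the bottom vertex, so you cannot pin down which layer must be non-constant in $D\cap S$.

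The paper's argument is quite different and rests on two ingredients you do not mention. First, $\pil(H,d,n)$ is nonincreasing in $n$ (the same averaging argument the paper uses for $\lambda(H,d,n)$), so it suffices to exhibit a single finite $N$ with $\pil(H,d,N)<1$. Second, a Ramsey-type result (Lemma~\ref{lem:monochromatic_Q_d}, from \cite{Goldwasser:2012}) guarantees an $N=N(2,d)$ such that in \emph{any} $2$-coloring of $V_N$ and for \emph{any} vertex $v$, some sub-$d$-cube through $v$ is layered with $v$ as bottom. If $H$ is not layered, that sub-$d$-cube is bad, so $\pil(H,d,N)\leq 1-\binom{N}{d}^{-1}<1$, and monotonicity carries this strict inequality to the limit. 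The uniform gap you are worried about manufacturing directly comes for free once you reduce to a finite $N$.
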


To prove Theorem \ref{thm:local_trivial_layered} we will need the following lemma which can be proved using a ``tower'' of applications of Ramsey's theorem.

\begin{lem}\label{lem:monochromatic_Q_d}
	\cite{Goldwasser:2012} For all positive integers $k$ and $d$, there exists a positive integer $N=N(k,d)$ such that for all $n\geq N$, if $c$ is any vertex coloring of $Q_n$ with $k$ colors, for each $v\in V_n$, there exists a sub-$d$-cube $D$ containing $v$ such that the coloring induced by $c$ on $D$ is layered with ``bottom'' vertex $v$ (so $v$ plays the role of $\emptyset$ is a canonical layered coloring).
\end{lem}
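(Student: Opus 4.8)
The plan is to reduce Lemma~\ref{lem:monochromatic_Q_d} to a single multi\hyp{}uniformity Ramsey statement about colorings of subsets, and then to prove that statement by a nested (``tower'') sequence of applications of the classical hypergraph Ramsey theorem. First I would invoke vertex\hyp{}transitivity of $Q_n$: the map $x\mapsto x\oplus v$ (coordinatewise XOR with $v$) is an automorphism carrying $\emptyset$ to $v$, so after replacing $c$ by its pullback it suffices to treat the case $v=\emptyset$; since the bound we produce will depend only on $k$ and $d$, the same $N$ then serves for every $v$ simultaneously, whence the uniformity over $v$ demanded by the statement. Now a sub\hyp{}$d$\hyp{}cube whose bottom vertex is $\emptyset$ is specified by its set of $d$ flip bits $A=\{i_1,\dots,i_d\}\subseteq[n]$, with the remaining coordinates fixed at $0$; its vertices are exactly the subsets of $A$, and a vertex that is a $j$\hyp{}element subset of $A$ has weight $j$. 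Thus the coloring induced on this cube is layered with bottom $\emptyset$ precisely when, for each $j$, all $j$\hyp{}element subsets of $A$ receive the same color under $c$.

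So the goal becomes purely combinatorial: if $n$ is large enough, there is a $d$\hyp{}set $A\subseteq[n]$ that is \emph{$j$\hyp{}monochromatic} (all of its $j$\hyp{}subsets share one color) for every $j\in\{0,1,\dots,d\}$. The levels $j=0$ and $j=d$ are automatic, since a $d$\hyp{}set has a unique subset of each of those sizes, so only the levels $j=1,\dots,d-1$ impose genuine conditions.

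I would build such an $A$ by a nested sequence of Ramsey applications. The crucial observation is \emph{inheritance}: if a set $Y$ is $j$\hyp{}monochromatic then so is every subset $Y'\subseteq Y$, because each $j$\hyp{}subset of $Y'$ is already a $j$\hyp{}subset of $Y$. Hence these properties accumulate as we shrink the ground set. Concretely, set $Y_0=[n]$ and process the levels $j=1,2,\dots,d-1$ one at a time: given $Y_{i-1}$, apply the classical Ramsey theorem for $j$\hyp{}uniform hypergraphs with $k$ colors to the coloring $B\mapsto c(B)$ of the $j$\hyp{}subsets of $Y_{i-1}$, obtaining a subset $Y_i\subseteq Y_{i-1}$ that is $j$\hyp{}monochromatic, with $|Y_i|$ bounded below by the inverse Ramsey function of $|Y_{i-1}|$. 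By inheritance, $Y_i$ retains monochromaticity at all previously processed levels, so the final set $Y_{d-1}$ is simultaneously $j$\hyp{}monochromatic for every $j\in\{1,\dots,d-1\}$. Choosing $N(k,d)$ to be the iterated Ramsey number large enough that this final set has at least $d$ elements---a tower whose height grows with $d$---we may take $A$ to be any $d$ elements of $Y_{d-1}$; the sub\hyp{}$d$\hyp{}cube with flip bits $A$ and bottom $\emptyset$ is then layered, and translating back by $x\mapsto x\oplus v$ produces the required cube at $v$.

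The only substantive point is the inheritance observation: it is precisely what lets a single set of size $d$ be monochromatic at all levels at once, rather than yielding $d-1$ possibly incompatible monochromatic sets from independent Ramsey arguments. Everything else is the standard reduction together with bookkeeping of the tower bound. Two minor points to verify are that $N(k,d)$ is genuinely independent of the coloring $c$ and the vertex $v$ (immediate, since each Ramsey number depends only on $k$, $d$, and the target size), and that $A$ consists of $d$ distinct flip bits so that it determines an honest sub\hyp{}$d$\hyp{}cube.
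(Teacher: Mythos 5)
Your proposal is correct and follows exactly the route the paper indicates: the paper gives no proof itself (the lemma is cited from \cite{Goldwasser:2012}) but remarks that it ``can be proved using a `tower' of applications of Ramsey's theorem,'' which is precisely your nested sequence of hypergraph Ramsey applications across the uniformities $j=1,\dots,d-1$, made to work by the inheritance observation. The translation reduction via $x\mapsto x\oplus v$ and the identification of bottom-$v$ sub-$d$-cubes with $d$-subsets of flip bits are the standard bookkeeping steps, and you have carried them out soundly.
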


\begin{proof}[Proof of Theorem \ref{thm:local_trivial_layered}]
	Suppose $H$ is layered with weight set $W_H\subseteq [0,d]$.  For each $n\geq d$, if $S$ is any layered configuration in $Q_n$ with weight set $W_S$ such that $W_H\subseteq W_S$, then every sub-$d$-cube containing $\emptyset$ has an exact copy of $H$, so $\pil(H,d)=1$.
	
	Conversely, suppose $\pil(H,d)=1$. We can view a configuration $S$ in $Q_n$ as a $2$-coloring of $V_n$ -- red if in $S$, blue if not in $S$.  Hence, by Lemma \ref{lem:monochromatic_Q_d}, there exists an integer $N$ such that for all $n\geq N$ and all $v\in V_n$, if $S$ is any configuration in $Q_n$ then there exists a sub-$d$-cube $D$ containing $v$ such that $D\cap S$ is a layered configuration in $D$.  Since $\pil(H,d,n)$ is a nonincreasing function of $n$, and since every vertex $v$ is contained in a sub-$d$-cube which has an exact copy of a layered configuration, if $H$ is not layered then $\pil(H,d)<1$.
\end{proof}

Since $\pil(H,d)=1$ for every layered configuration $H$ in $Q_d$, our usual procedure of using $\pil(H,d)$ to get an upper bound for $d$-cube density cannot work and that is why we have not been able to obtain upper bounds by hand for any nontrivial layered configuration in $Q_d$ for any $d$. As with $W_8$, Baber's flag algebra upper bounds for $\ex(W_7,3)$ and $\ex(W_{12},3)$ are within $10^{-9}$ of our lower bounds ($\frac{1}{3}\leq \ex(W_7,3)$ and $\frac{1}{2}\leq \ex(W_{12},3)$). The flag algebra upper bound for $\ex(W_3,3)$ (one vertex in $Q_3$) is $.610043$, not so close to our lower bound of $\frac{1}{2}$.

\begin{conj}
	$\ex(W_7,3)=\frac{1}{3}, \ex(W_8,3)=\frac{2}{3},\ex(W_{12},3)=\frac{1}{2}$.
\end{conj}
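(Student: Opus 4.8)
The lower bounds $\ex(W_7,3)\ge\tfrac13$, $\ex(W_8,3)\ge\tfrac23$ and $\ex(W_{12},3)\ge\tfrac12$ are already in hand from the layered constructions: $W_7$ (the antipodal pair, weight set $\{0,3\}$) and $W_8$ (the weight-$2$ layer $\{110,101,011\}$) come from the layered configurations $\wt\equiv 0$ and $\wt\equiv 2\pmod 3$, while $W_{12}$ (the claw, weight set $\{0,1\}$) comes from $\wt\equiv 0,1\pmod 4$. So the entire plan is to supply the matching upper bounds. The difficulty---and the reason these remain conjectures---is that all three configurations are layered, so by Theorem \ref{thm:local_trivial_layered} we have $\pil(W_7,3)=\pil(W_8,3)=\pil(W_{12},3)=1$, and the maximum-local-density bound $\ex(H,d)\le\pil(H,d)$ that we exploit everywhere else is vacuous here. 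Any proof must therefore be genuinely \emph{global}: the extremal $S$ has density $1$ of good cubes through the ``bottom-type'' vertices (those of the correct weight modulo $m$) but only those, so the global fraction is pulled down to the conjectured value only after averaging over all $2^n$ vertices.

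My primary plan is to make Baber's flag-algebra bounds exact. His computations land within $10^{-9}$ of each conjectured value, so I would set up the flag-algebra semidefinite program for $3$-cube density in $Q_n$ and seek an \emph{exact} dual certificate: a sum-of-squares combination of the relevant sub-cube densities whose value equals $\tfrac13$, $\tfrac23$, $\tfrac12$ respectively. Concretely this means (i) fixing a flag/type size large enough that the numerical optimum is already tight, (ii) identifying the extremal configurations---here the layered-mod-$m$ constructions above, which pin down the zero eigenvectors the certificate must respect, and (iii) rounding the numerical SDP solution to a rational positive-semidefinite certificate that still certifies the bound. The last step is the crux: rounding an SDP solution to an exact rational optimum while preserving feasibility is the standard hard part of turning a flag-algebra bound into a theorem, and it is precisely what the authors could not do by hand.

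A complementary, more combinatorial route is to use the rigidity of ``exact copy'' to convert each good cube into a concrete substructure of $S$ and then bound it by a Goodman-type averaging argument, in the spirit of the inducibility results of Section \ref{sec:inducibility}. For $W_7$ a cube $R$ is good exactly when $S\cap R$ is an antipodal pair $\{x,\bar x\}$ with no third vertex of $S$ in $R$; since two vertices at Hamming distance $3$ determine a unique cube in which they are antipodal, good cubes are in bijection with the ``isolated'' distance-$3$ pairs of $S$, and the target is to show these cannot exceed $\tfrac13$ of all $\binom{n}{3}2^{n-3}$ cubes. For $W_8$ a cube is good iff $S\cap R=N_R(t)$ for some $t\in R$ (the three neighbours of a vertex), and for $W_{12}$ iff $S\cap R=\{b\}\cup N_R(b)$; in each case I would count incidences between good cubes and their distinguished vertices, derive the resulting linear inequalities, and bound the global density for every large $n$. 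The main obstacle throughout is the same: with no usable local bound one is forced into a global counting or stability argument, and I expect proving the requisite stability---that any near-extremal $S$ must be essentially layered modulo $m$---to be the genuinely hard step, very likely itself requiring the exact flag-algebra certificate of the first plan to get off the ground.
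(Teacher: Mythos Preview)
This statement is a \emph{conjecture} in the paper, not a theorem: the authors give no proof and explicitly say they ``have not been able to obtain upper bounds by hand for any nontrivial layered configuration.'' So there is no proof in the paper to compare your attempt against.

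Your submission is likewise not a proof but a research plan, and you are candid about this. Your diagnosis of the obstruction is exactly right and matches the paper's own explanation: $W_7$, $W_8$, $W_{12}$ are layered, so by Theorem~\ref{thm:local_trivial_layered} one has $\pil=1$ and the local-density route is vacuous. Your identifications of the configurations (antipodal pair, weight-$2$ triple, claw) and of the lower-bound constructions are correct, and the bijective descriptions of good cubes (isolated distance-$3$ pairs for $W_7$, $S\cap R=N_R(t)$ for $W_8$, $S\cap R=\{b\}\cup N_R(b)$ for $W_{12}$) are accurate starting points. But neither of your two routes is carried out: for the flag-algebra route you only describe the standard rounding programme without executing it, and for the combinatorial route you set up the incidence count but prove no inequality. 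In particular, nothing here bounds the number of ``isolated'' distance-$3$ pairs, or the number of cubes realising $N_R(t)$, below the target fractions.

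In short: there is no gap to point to because there is no argument yet, and there is nothing in the paper to compare it with. What you have written is a fair summary of why the conjecture is open and of the natural lines of attack, not a proof.
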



\section{An Infinite Family} 
\label{sec:an_infinite_family}

Theorem \ref{thm:V_3_in_Q_2} can be generalized to apply to an infinite family of configurations containing $Z_3$, $W_{6}$, and $W_{13}$.  Let $d$ and $i$ be positive integers with $1\leq i<d$. We define the configuration $H(d,i)$ in $Q_d$ by
\begin{align*}
	H(d,i) &=\setst{(v_1,v_2,\ldots,v_d)\in V_d}{\sum_{j=1}^{i}v_j \textrm{ is even}}.
\end{align*}

\begin{theorem}\label{regularfamily}
	$\lambda(H(d,i),d)=\binom{d}{i}\frac{i^i (d-i)^{d-i}}{d^d}$.
	\begin{proof}
		Each vertex in $H(d,i)$ has precisely $d-i$ neighbors in $H(d,i)$ (change any one of the last $d-i$ coordinates).  Since $H(d,i)$ is self-complementary in $Q_d$ $\left(\sum_{j=1}^i v_j \textrm{ is odd} \right)$, $\pil(H(d,i),d)=\pilin(H(d,i),d)=\pilout(H(d,i),d)$.
		
		If $n\geq d$ and $v\in S\subseteq V_n$ and $R$ is a sub-$d$-cube of $Q_n$ containing $v$, then $R$ can be good only if precisely $d-i$ neighbors of $v$ in $R$ are in $S$.  If $x$ is the fraction of neighbors of $v$ in $Q_n$ which are in $S$, then the fraction of sub-$d$-cubes of $Q_n$ containing $v$ which have precisely $d-i$ neighbors in $S$ is $f(x)=\binom{d}{i}x^{d-i}(1-x)^i$.  By simple calculus, $f(x)$ is maximized on $[0,1]$ when $x=\frac{d-i}{d}$, so $\pilin(H(d,i))\leq\binom{d}{i}\frac{(d-i)^{d-i} i^i}{d^d}$.
		
		To show this upper bound is a lower bound as well, let $S=${\large\{}$(x_1,x_2,\ldots,x_n):\ab\sum_{j=1}^m v_j$ is even, where $m=\floor{\frac{in}{d}}${\large\}}.  Then any sub-$d$-cube of $Q_n$ with precisely $i$ flip bits in  $[1,m]$ is good, and this is a fraction
		\[
			\binom{d}{i}\frac{m^i(n-m)^{d-i}}{n^d} = \binom{d}{i}\left( \frac{\floor{\frac{in}{d}}}{n} \right)^i \left( \frac{\ceil{\frac{(d-i)n}{d}}}{n} \right)^{d-i}
		\]
		of all sub-$d$-cubes, and the limit as $n$ goes to infinity is $\binom{d}{i}\frac{i^i(d-i)^{d-i}}{d^d}$.
	\end{proof}
\end{theorem}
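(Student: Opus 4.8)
The plan is to prove matching upper and lower bounds, exactly generalizing the argument of Theorem~\ref{thm:V_3_in_Q_2}, by exploiting two structural features of $H(d,i)$. First I would record that $H(d,i)$ is \emph{regular of degree $d-i$} inside $Q_d$: flipping one of the first $i$ coordinates of a vertex changes the parity of $\sum_{j=1}^i v_j$ and hence leaves $H(d,i)$, while flipping one of the last $d-i$ coordinates preserves membership, so every vertex of $H(d,i)$ has exactly $d-i$ neighbors in $H(d,i)$. Second, flipping a single one of the first $i$ coordinates is an automorphism of $Q_d$ carrying $H(d,i)$ onto its complement, so $H(d,i)$ is self-complementary and therefore $\pil(H(d,i),d)=\pilin(H(d,i),d)=\pilout(H(d,i),d)$; it then suffices to bound the in-density and invoke $\lambda\leq\pil$ from Section~\ref{sec:local_d_cube_density}.

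For the upper bound I would fix $v\in S\subseteq V_n$ and a sub-$d$-cube $R\ni v$. Because every vertex of $H(d,i)$ has exactly $d-i$ neighbors in the configuration, a necessary condition for $S\cap R$ to be an exact copy of $H(d,i)$ is that exactly $d-i$ of the $d$ neighbors of $v$ inside $R$ lie in $S$. Writing $\abs{S\cap N(v)}=xn$, the neighbors of $v$ in $R$ are determined by the $d$ flip bits of $R$, so the number of sub-$d$-cubes through $v$ meeting this condition is $\binom{xn}{d-i}\binom{n-xn}{i}$. Dividing by $\binom{n}{d}$ and letting $n\to\infty$ yields $\gin\leq f(x)$ with $f(x)=\binom{d}{i}x^{d-i}(1-x)^i$ in the limit, and elementary calculus shows $f$ is maximized on $[0,1]$ at $x=\frac{d-i}{d}$, giving $\pilin(H(d,i),d)\leq\binom{d}{i}\frac{i^i(d-i)^{d-i}}{d^d}$ and hence the same upper bound for $\lambda(H(d,i),d)$.

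For the matching lower bound I would exhibit the partition-modular construction $S=\setst{(x_1,\ldots,x_n)}{\sum_{j=1}^m x_j\text{ is even}}$ with $m=\floor{in/d}$, the discrete analogue of the optimal $x=\frac{d-i}{d}$. The key verification is that any sub-$d$-cube with exactly $i$ of its $d$ flip bits lying in $[1,m]$ is good: within such a cube the parity defining $S$ depends on exactly $i$ of the $d$ free coordinates, so the induced configuration is a parity condition on an $i$-subset of the flip bits, i.e.\ an exact copy of $H(d,i)$. Counting these cubes as $\binom{m}{i}\binom{n-m}{d-i}$, dividing by $\binom{n}{d}$, and using $m/n\to i/d$ gives the limiting density $\binom{d}{i}\frac{i^i(d-i)^{d-i}}{d^d}$, matching the upper bound.

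The conceptual crux, and the step I would flag as doing the real work, is the regularity observation: it is what makes ``exactly $d-i$ neighbors in $S$'' simultaneously the correct necessary condition for the upper bound and the exact combinatorial target the construction hits. Once that is in place, the asymptotic identification of the binomial ratios and the single-variable optimization are routine, as is checking that the modular construction produces an \emph{exact} copy rather than merely an isomorphic subgraph -- this is guaranteed because $H(d,i)$ is self-complementary and a parity condition on any $i$-subset of the flip bits is an automorphic image of $H(d,i)$.
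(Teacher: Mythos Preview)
Your proposal is correct and follows essentially the same approach as the paper: both use the $(d-i)$-regularity of $H(d,i)$ and its self-complementarity to reduce the upper bound to maximizing $\binom{d}{i}x^{d-i}(1-x)^i$, and both use the same partition-modular construction with $m=\lfloor in/d\rfloor$ for the lower bound. Your presentation is slightly more explicit in writing out the hypergeometric count $\binom{xn}{d-i}\binom{n-xn}{i}/\binom{n}{d}$ before passing to the limit, whereas the paper simply asserts the limiting fraction $f(x)$ directly, but the argument is the same.
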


Note that the configuration $W_6$ in $Q_3$ is $H(3,1)$, $W_{13}$ is $H(3,2)$, and $Z_3$ in $Q_2$ is $H(2,1)$.  Further note that $\lim_{d\to\infty}\lambda(H(d,i),d)=\frac{i^i}{i!e^i}$.  In particular, when $i=1$, $\lim_{d\to \infty} (H(d,1),d)=\frac{1}{e}$. ($H(d,1)$ is a copy of $Q_{d-1}$ in $Q_d$)


\section{Configurations in $Q_3$} 
\label{sec:configurations_in_q_3_}

The configurations in $Q_3$ are illustrated in Figure \ref{fig:Q3} and listed in Table \ref{bestQ3s}. Recalling that $\lambda(H)=\lambda\left(\overline{H}\right)$ where $\overline{H}$ is the complement of $H$ in $Q_d$, the table lists just one configuration in each complementary pair.

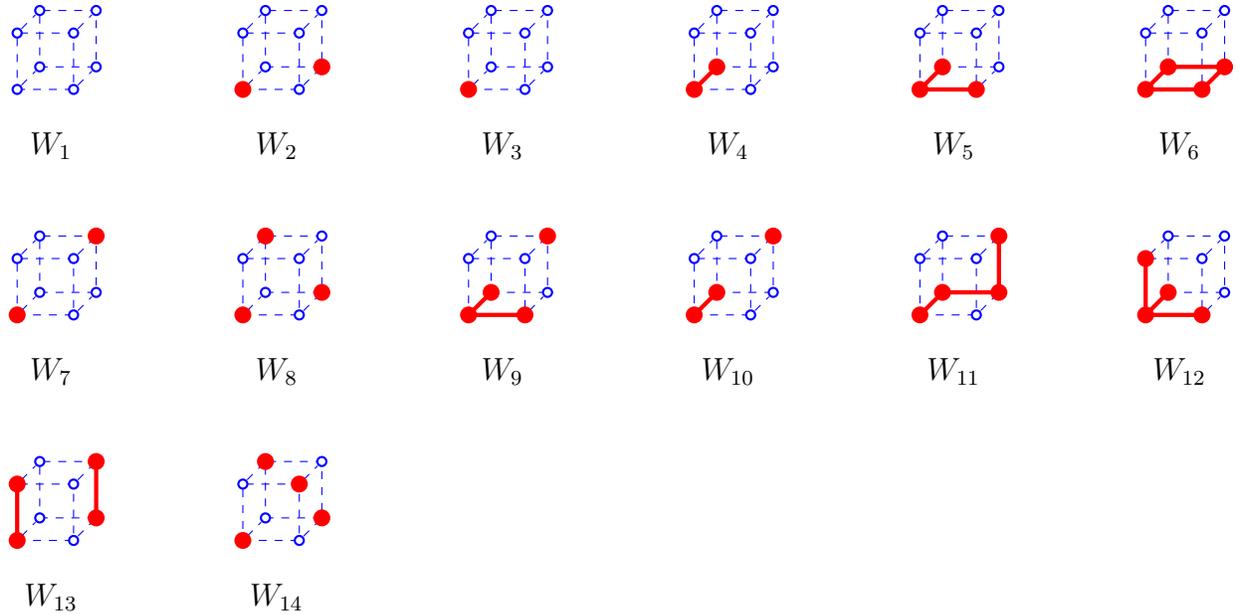
\begin{figure}[htbp]
	\centering
	\begin{tikzpicture}[scale=1.5,inner sep=.5mm]
		\foreach \j in {0,1,2} {
		\foreach \i in {1,...,6} {
		\pgfmathsetmacro{\subscriptlabel}{int(\i+6*\j)}
		\ifthenelse{\subscriptlabel<15}{
			\node (a\i\j) at (2*\i,-2*\j+1) {};
			\node (b\i\j) at (2*\i+.5,-2*\j+1) {};
			\node (c\i\j) at (2*\i+.5,-2*\j+1.5) {};
			\node (d\i\j) at (2*\i,-2*\j+1.5) {};
			\node (e\i\j) at (2*\i+.2,-2*\j+1.2) {};
			\node (f\i\j) at (2*\i+.7,-2*\j+1.2) {};
			\node (g\i\j) at (2*\i+.7,-2*\j+1.7) {};
			\node (h\i\j) at (2*\i+.2,-2*\j+1.7) {};
			\node at (2*\i+.3,-2*\j+.5)  {$W_{\subscriptlabel}$};
			}
			{}; 
		}
		}
		\foreach \j in {0,1,2} {
		\foreach \i in {1,...,6} {
		\pgfmathsetmacro{\subscriptlabel}{int(\i+6*\j)}
		\ifthenelse{\subscriptlabel<15}{
			\draw [dashed, blue] (a\i\j) rectangle (c\i\j);
			\draw [dashed, blue] (e\i\j) rectangle (g\i\j);
				\foreach \front/\back in {a/e,b/f,c/g,d/h}{
					\draw [dashed, blue] (\front\i\j) -- (\back\i\j);
				}
			}{};
		}
		}
		\foreach \j in {0,1,2} {
		\foreach \i in {1,...,6} {
		\foreach \m in {a,b,c,d,e,f,g,h} {
		\pgfmathsetmacro{\subscriptlabel}{int(\i+6*\j)}
		\ifthenelse{\subscriptlabel<15}{
			\filldraw[thick,draw=blue,fill=white] (\m\i\j) circle (.4mm);
			}
			{}; 
		}}}
		\foreach \j in {0,1,2} {
		\foreach \i in {1,...,6} {
		\pgfmathsetmacro{\n}{int(\i+6*\j)}
		\ifthenelse{\n<15\AND\n>1}{
			\fill[fill=red] (a\i\j) circle[radius=.75mm];
			}
			{}; 
		}
		}
		\foreach \j in {0,1,2} {
		\foreach \i in {1,...,6} {
		\pgfmathsetmacro{\n}{int(\i+6*\j)}
		\ifthenelse{\n=2\OR\n=6\OR\n=8\OR\n=11\OR\n=13\OR\n=14}{
			\fill[fill=red] (f\i\j) circle[radius=.75mm];
			}
			{}; 
		}
		}
		\foreach \j in {0,1,2} {
		\foreach \i in {1,...,6} {
		\pgfmathsetmacro{\n}{int(\i+6*\j)}
		\ifthenelse{\n=4\OR\n=5\OR\n=6\OR\n=9\OR\n=10\OR\n=11\OR\n=12}{
			\fill[fill=red] (e\i\j) circle[radius=.75mm];
			}
			{}; 
		}
		}
		\foreach \j in {0,1,2} {
		\foreach \i in {1,...,6} {
		\pgfmathsetmacro{\n}{int(\i+6*\j)}
		\ifthenelse{\n=5\OR\n=6\OR\n=9\OR\n=12}{
			\fill[fill=red] (b\i\j) circle[radius=.75mm];
			}
			{}; 
		}
		}
		\foreach \j in {0,1,2} {
		\foreach \i in {1,...,6} {
		\pgfmathsetmacro{\n}{int(\i+6*\j)}
		\ifthenelse{\n=7\OR\n=9\OR\n=10\OR\n=11\OR\n=13}{
			\fill[fill=red] (g\i\j) circle[radius=.75mm];
			}
			{}; 
		}
		}
		\fill[fill=red] (h21) circle[radius=.75mm];
		\fill[fill=red] (d61) circle[radius=.75mm];
		\fill[fill=red] (d12) circle[radius=.75mm];
		\fill[fill=red] (c22) circle[radius=.75mm];
		\fill[fill=red] (h22) circle[radius=.75mm];

		\draw[ultra thick,red] (a60) -- (b60) -- (f60) -- (e60) -- (a60);
		\draw[ultra thick,red] (a51) -- (e51) -- (f51) -- (g51);
			\foreach \start/\end in {a40/e40,a50/e50,a50/b50,a31/b31,a31/e31,a41/e41,a61/b61,a61/d61,a61/e61,a12/d12,f12/g12}{
				\draw[ultra thick, red] (\start) -- (\end);
			}
	\end{tikzpicture}
	\caption{Configurations in $Q_3$.}
	\label{fig:Q3}
\end{figure}

\begin{table}[htbp]
	\centering
		\begin{tabular}{|c|c|c|c|}
			\hline
		Configuration & Construction & Lower Bound & Upper Bound\\
			\hline\hline
		$W_1$ & $\emptyset$ & 1 & 1\\
		\hline
		$W_2$ & $A\cup B$ taking 00 mod 2 & $3/4$ & \begin{tabular}{@{}c@{}}$3/4$\\{\big(}Theorem \ref{W2}{\big)}\end{tabular}\\
		\hline
		$W_3$ & Layered: 0 mod 4 & $1/2$ & $.610043$\\
		\hline
		$W_4$ & $A\cup B$ taking $0*$ mod 3 & $(2/3)^3\approx 0.2963 $ & .$3047619$ \\
		\hline
		$W_5$ & \begin{tabular}{@{}c@{}}$A\cup B$ taking\\01,10,11 mod $(3,3)$\end{tabular} & 1/3 & .$333398$ \\
		\hline
		$W_6$ & $A\cup B$ taking $0*$ mod 2 & $4/9$ & $4/9$ (Theorem \ref{regularfamily})\\
		\hline
		$W_7$ & Layered: 0 mod 3 & $1/3$ & $.33333333366$ \\
		\hline
		$W_8$ & Layered: 0 mod 3 & $2/3$ & $.66666666673$ \\
		\hline
		$W_9$ & \begin{tabular}{@{}c@{}}$A\cup B$ taking\\00,10,21, 31 mod $(4,2)$\end{tabular} & $4/9$ & $.44444444453$ \\
		\hline
		$W_{10}$ & \makecell{Blow-up of the configuration\\ given in Figure \ref{fig:Q6config}} & \makecell{$\frac{5}{12}$\\ (Proposition \ref{prop:W10})} & $.4166666671$ \\
		\hline
		$W_{11}$ & Perfect 8-cycle blow-up & $3/8$ & \makecell{$3/8$\\ (We proved this in \cite{perfectpathpaper})}\\
		\hline
		$W_{12}$ & Layered: 0 and 1 mod 4 & $1/2$ & $.50000000004$ \\
		\hline
		$W_{13}$ & $A\cup B$ taking $0*$ mod 2 & $4/9$ & $4/9$ (Theorem \ref{regularfamily})\\
		\hline
		$W_{14}$ & Layered: 0 mod 2 & 1 & 1 \\
		\hline

		\end{tabular}
	\caption{Summary of the best results for configurations in $Q_3$.}
	\label{bestQ3s}
\end{table}

Six of these configurations are layered: the two trivial ones ($W_1$ and $W_{14}$) and four others ($W_3, W_7, W_8$, and $W_{12}$).  Letting $S$  be the layered configuration 0 mod 3 in $Q_n$ shows that $\lambda(W_7,3)\geq \frac{1}{3}$ and $\lambda(W_8,3)\geq \frac{2}{3}$.  The layered configuration 0 or 1 mod 4 shows that $\lambda(W_{12},3)\geq \frac{1}{2}$. Baber's flag algebra upper bound is within $10^{-9}$ of the lower bound for each of these.  For $W_3$ -- one vertex in $Q_3$ -- the layered configuration 0 mod 4 in $Q_n$ shows that $\lambda(W_3,3)\geq \frac{1}{2}$, while the flag algebra upper bound is .610043.

The configurations $W_6$ and $W_{13}$ are $H(3,1)$ and $H(3,2)$ respectively in the infinite family of Section \ref{sec:an_infinite_family}, so $\lambda(W_6,3)=\lambda(W_{13},3)=\frac{4}{9}$.

We showed $\lambda(W_{11},3)=\frac{3}{8}$ in \cite{perfectpathpaper} ($W_{11}$ is the induced path with 4 vertices).  The proof of Theorem \ref{W2} shows that $\lambda(W_2,3)=\frac{3}{4}$.

For $W_4$, with $[n]=A\cup B$ where $\abs{A}=\floor{\frac{2n}{3}}$ and $B=\ceil{\frac{n}{3}}$ we take $S$ to be the set of all vertices $v=(x_1,x_2,\ldots,x_n)$ where $\sum_{i\in A} x_i$ is divisible by 3.  Suppose a $Q_3$ has precisely two flip bits whose coordinate numbers are in $A$.  If the sum of all the other coordinates in $A$ is $m$, then the $Q_3$ will have configuration $W_4$ if and only if $m$ is congruent to 0 or 1 mod 3.  For example, if $m\equiv 0$ mod 3 then a vertex in the $Q_3$ will be in $S$ if and only if the two coordinates in $A$ are equal to 0, while if $m\equiv 1$ mod 3 then a vertex in the $Q_3$ will be in $S$ if and only if the two coordinates in $A$ are equal to 1 (if $m\equiv 2$ mod 3 then the configuration will be $W_{13}$).  In the limit as $n$ goes to infinity, $\frac{4}{9}$ of the sub-3-cubes will have precisely two flip bits in $A$, so $\lambda(W_4,3)\geq \frac{4}{9}\cdot\frac{2}{3}=\frac{8}{27}\approx .2963$.  Baber's flag algebra upper bound is $.304762$.

For $W_9$, we partition $[n]$ into sets $A$ and $B$ with $\abs{A}=\floor{\frac{2n}{3}}$ and $B=\ceil{\frac{n}{3}}$ and let $S$ be the set of all vertices $v=(x_1,x_2,\ldots,x_n)$ such that both $\sum_{i\in A} x_i\equiv 0$ or 1 mod 4 and $\sum_{j\in B}x_j\equiv 0$ mod 2 or both $\sum_{i\in A}x_i\equiv 2$ or 3 mod 4 and $\sum_{j\in B}x_j\equiv 1$ mod 2.  Every sub-3-cube $R$ which has two flip bits with coordinate numbers in $A$ and one in $B$ will have an exact copy of $W_9$.  For example, if $R$ is such that the sum of the non flip bit coordinates in $A$ is 1 mod 4 and the sum of the non flip bit coordinates in $B$ is 1 mod 2 then the coordinates in the flip bits for $R$ of the vertices in $S$ with the coordinate in $B$ listed third are 001, 100, 010, 110, so $R$ has an exact copy of $W_9$. In the limit as $n$ goes to infinity, $\frac{4}{9}$ of the sub-3-cubes will have precisely two flip bits with coordinate numbers in $A$, so $\lambda(W_9,3)\geq \frac{4}{9}$.

For $W_5$, we partition $[n]$ into a set $A$ of size $\ceil{\frac{n}{2}}$ and $B$ of size $\floor{\frac{n}{2}}$ and let $S$ be all vectors $v=(x_1,x_2,\ldots,x_n)$ such that $\sum_{i\in A} x_i$ and $\sum_{j\in B}x_j$ are both congruent to either 0 or 1 mod 3. If a $Q_3$ has two flip bits with coordinate numbers in $A$ and one with coordinate number in $B$, we let $y$ be the sum of the non flip bit coordinates in $A$ and $z$ be the sum of the non flip bit coordinates in $B$.  This $Q_3$ will be ``good'' if $(y,z)$ is congruent to $(0,1)$, $(0,2)$, $(2,1)$, or $(2,2)$ mod 3.  For example, if $(y,z)$ is congruent to $(0,2)$, then the coordinate in the flip bits for the $Q_3$ of the vertices in $S$ with the coordinate in $B$ listed third are 001, 011, and 101 which is an exact copy of $W_5$.  Since 4 of the 9 ordered pairs $(y,z)$ where $y,z\in\{0,1,2\}$, produce ``good'' $Q_3$'s, in the limit as $n$ goes to infinity, $\frac{4}{9}$ of the $Q_3$'s which have two flip bits in $A$ and one in $B$ will have exact copies of $W_5$.  By symmetry, $\frac{4}{9}$ of the $Q_3$'s which have one flip bit in $A$ and two in $B$ will have exact copies of $W_5$, so $\lambda(W_5,3)\geq \frac{3}{8}\cdot\frac{4}{9}+\frac{3}{8}\cdot\frac{4}{9}=\frac{1}{3}$.  Baber's flag algebra upper bound is .333398.

In the remainder of this section, we will consider the two remaining configurations in $Q_3$: $W_2$ and $W_{10}$.

The following Lemma is used in the proof that $\lambda(W_2,3)=\frac{3}{4}$.

		\begin{lem}\label{T2W2}
			Let $G$ be a graph with $n$ vertices where $n$ is even.  If $\abs{E(G)}=e$, then $G$ has at most $\min\left\{n\binom{\frac{n}{2}}{2}, \frac{e}{2}(n-2)\right\}$ induced copies of $K_{1,2}$.
			\begin{proof}
				That it has at most $n\binom{\frac{n}{2}}{2}$ was proved in \cite{PG:1975}.  The optimizing graph is $K_{\frac{n}{2},\frac{n}{2}}$.
				
				Each edge of $G$ can be in at most $n-2$ induced $K_{1,2}$s and summing over all edges $uv$ counts each $K_{1,2}$ twice.
			\end{proof}
		\end{lem}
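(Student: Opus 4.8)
The plan is to establish the two quantities inside the minimum separately; since the number of induced copies of $K_{1,2}$ is bounded above by each of them, it is bounded by their minimum.

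For the first bound $n\binom{\frac{n}{2}}{2}$, I would simply invoke the Pippenger--Golumbic result \cite{PG:1975}: among all graphs on $n$ vertices, the maximum number of induced copies of $K_{1,2}$ is $n\binom{\frac{n}{2}}{2}$, attained by $K_{\frac{n}{2},\frac{n}{2}}$. To see that this count is correct for the extremal graph, note that in $K_{\frac{n}{2},\frac{n}{2}}$ each of the $n$ vertices is the center of exactly $\binom{\frac{n}{2}}{2}$ induced $K_{1,2}$'s, one for each pair of vertices in the opposite part (these two leaves lie in a common part and are therefore non-adjacent, as required for an \emph{induced} copy). This half of the statement needs nothing beyond the citation.

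For the second bound $\frac{e}{2}(n-2)$, I would argue by double counting over the edges of $G$. The key structural observation is that every induced $K_{1,2}$ consists of a center $c$ joined to two non-adjacent leaves $a,b$, so it contains exactly two edges of $G$, namely $ca$ and $cb$. Fixing an edge $uv\in E(G)$, I would count the induced $K_{1,2}$'s in which $uv$ is one of these two edges: the third vertex $w$ together with $u,v$ induces a $K_{1,2}$ exactly when $w$ is adjacent to precisely one of $u$ and $v$. There are at most $n-2$ candidates $w\ne u,v$, so each edge lies in at most $n-2$ induced $K_{1,2}$'s. Summing this over all $e$ edges counts every induced $K_{1,2}$ exactly twice (once through each of its two edges), yielding at most $e(n-2)$ incidences and hence at most $\frac{e}{2}(n-2)$ induced copies.

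This argument is entirely elementary, and there is no serious obstacle. The only points that require a moment's care are the two facts that make the double count clean: that an induced $K_{1,2}$ contains exactly two edges, so the overcount factor is exactly $2$ (rather than something larger, as it would be for a denser target such as $K_3$), and that the required third vertex is adjacent to exactly one endpoint of the fixed edge, which rules out both the triangle and the isolated-edge cases and pins the per-edge contribution at $n-2$.
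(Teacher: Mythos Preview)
Your proof is correct and follows exactly the same approach as the paper: cite \cite{PG:1975} for the first bound, and for the second observe that each edge lies in at most $n-2$ induced $K_{1,2}$'s while the sum over edges counts each copy twice. The additional detail you provide (why the third vertex must be adjacent to exactly one endpoint, and why the extremal graph gives the count $n\binom{n/2}{2}$) only fleshes out what the paper leaves implicit.
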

		
Without restriction on the parity of $n$, the bound in Lemma \ref{T2W2} is $\min\left\{ \floor{\frac{n^2}{4}}\frac{n-2}{2},\frac{e}{2}(n-2) \right\}$.
		
\begin{theorem}\label{W2}
	$\ds\lambda(W_2,3)=\frac{3}{4}$.

	\begin{proof}
		We partition $[n]$ into sets $A$ and $B$ of sizes $\ceil{\frac{n}{2}}$ and $\floor{\frac{n}{2}}$ respectively.  We define a configuration $S$ in $Q_n$ by $v=(x_1,x_2,\ldots,x_n)\in S$ if and only if $\sum_{i\in A} x_i$ and $\sum_{j\in B}x_j$ are both even.  If $D$ is any sub-3-cube of $Q_n$ with one or two flip bits in coordinates with numbers in $A$ (and two or one in $B$) then $D$ has an exact copy of $W_2$.  For example, if two flip bits have coordinate numbers in $A$ and one in $B$, and if the sum of the non flip bit coordinates in $A$ is even and of the non flip bit coordinates in $B$ is odd, then the vertices of $D$ whose flip bit coordinates (with the flip bit in $B$ listed third) are 001 and 111 are in $S$, so $D$ has an exact copy of $W_2$.
		
		In the limit as $n$ goes to infinity, $\frac{3}{4}$ of the sub-3-cubes have one or two flip bits with coordinate numbers in $A$, so $\lambda(W_2,3)\geq \frac{3}{4}$.
		
		For the upper bound, suppose $\emptyset\in S$, and let $\M$ be the set of good $Q_3$s containing $\emptyset$.  Construct a graph $G_s$ with $V(G_s)=[n]$ and $E(G_s)=\{uv:\emptyset,uv$ are the vertices in $S$ for some $M\in\M\}$. If $u,v,x$ are flip bits for some $M$ in $\M$, and if $uv$ is in $M$, then neither $ux$ nor $vx$ can be in $E(G_s)$, so $\abs{\M}$ is less than or equal to the number of induced copies of the graph with three vertices and a single edge in a graph with $n$ vertices.  Equivalently, this is less than or equal to the number of induced copies of $K_{1,2}$ in a graph on $n$ vertices.  This means $\pili(W_2)\leq i(K_{1,2})=\frac{3}{4}$.
		
		Now suppose $\emptyset\not\in S$.  Let $A=\{i\in [n]:i\in S\}$, $B=[n]\setminus A$, $\abs{A}=a$, and $\abs{B}=b$.  Let $\M$ be the set of all good $Q_3$s containing $\emptyset$.  If $M\in\M$, then the two vertices of $M$ in $S$ have the structure of Type I, II, or III as shown in Figure \ref{T123W2}, when $i$ and $j$ are vertices adjacent to $\emptyset$ which are in $S$, while $u,v,x,y$ are vertices adjacent to $\emptyset$ which are not in $S$.  For example, in the Type II configuration the vertices $i$ and $ixy$ are in $S$ while $\emptyset, x,y,ix,iy,xy$ are not in $S$.
		
		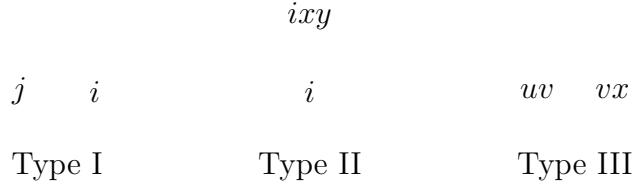
\begin{figure}[htbp]
			\centering
			\begin{tikzpicture}[grow'=up,
								sibling distance=10mm,
								level distance=10mm,
								edge from parent/.style={}]
				\node {Type I}
					child {node {$j$}}
					child {node {$i$}};
			\end{tikzpicture}
			\qquad\qquad
			\begin{tikzpicture}[grow'=up,
								sibling distance=10mm,
								level distance=10mm,
								edge from parent/.style={}]
				\node {Type II}
					child {node {$i$}
							child {node {$ixy$}}
					};
			\end{tikzpicture}
			\qquad\qquad
			\begin{tikzpicture}[grow'=up,
								sibling distance=10mm,
								level distance=10mm,
								edge from parent/.style={}]
				\node {Type III}
						child {node {$uv$}}
						child {node {$vx$}};
			\end{tikzpicture}
			\caption{The three possible structures of vertices in $S$ for $M\in\M$ where $\emptyset\not\in S$.}
			\label{T123W2}
		\end{figure}
		
		Define a graph $G_s$ by $V(G_s)=B$ and $E(G_s)=\{uv:uv$ and $vx$ are the vertices in $S$ of some Type III $M\in \M$ for some $x$ with flip bits $u,v,x\}$.  For such an $M$, $ux$ cannot be in $S$ so the number of Type III $Q_3$s in $\M$ is at most the number of induced copies of $K_{1,2}$ in $G_s$.
		
		If $L$ is a Type I $Q_3$ in $\M$ with flip bits $i,j,x$ and with $i,j$ the vertices of $L$ in $S$, then $i,j\in A$ and $x\in B$.  So the number of Type Is in $\M$ is at most $b\binom{a}{2}$.  If $L$ is a Type II $Q_3$ in $\M$ where $i,ixy$ are the vertices of $L$ in $S$, then $i\in A$ and $x,y\in B$, but $xy\not\in E(G_s)$.  So, if $e=\abs{E(G_s)}$, then the number of Type II $Q_3$s in $\M$ is at most $a\left[ \binom{b}{2}-e \right]$.  By Lemma \ref{T2W2}, we have that the number of Type III $Q_3$s in $\M$ is at most $\min\left\{b\binom{\frac{b}{2}}{2}, \frac{e}{2}(b-2)\right\}$.
		
		One good candidate to maximize $\abs{\M}$ is for $\M$ to have no Type IIIs (i.e. $\abs{E(G_s)}=0$), and for $S$ to include all vertices $j$ in $A$ and all vertices $ixy$ where $i\in A$ and $x,y\in B$.  That would give $\abs{\M}=b\binom{a}{2}+a\binom{b}{2}$.  Another good candidate would be to have $G_s$ be $K_{\frac{b}{2},\frac{b}{2}}$, so as to maximize the number of Type IIIs.  This would mean that $\abs{\M}=b\binom{a}{2} + a\left[ \binom{b}{2} - \frac{b^2}{4} \right] + b\binom{\frac{b}{2}}{2}=b\binom{a}{2} + (2a+b)\binom{\frac{b}{2}}{2}$.
		
		If $\abs{E}=e$, then we have
			\begin{equation}\tag{$\star$}\label{eqstar}
				\abs{\M}\leq b\binom{a}{2} + a\left[\binom{b}{2} - e \right] + \min\left\{b\binom{\frac{b}{2}}{2}, \frac{e}{2}(b-2)\right\}.
			\end{equation}
		Where the three summands on the right-hand side are the maximum number of ``good'' $Q_3$s of Type I, II, and III, and we have used Lemma \ref{T2W2}.
		If $e\geq \frac{b^2}{4}$ then $\min\{b\binom{\frac{b}{2}}{2},\frac{e}{2}(b-2)\}=b\binom{\frac{b}{2}}{2}$, so the right-hand side of inequality \eqref{eqstar} is a decreasing function of $e$.  Hence to maximize $\abs{\M}$ we can assume $e\leq \frac{b^2}{4}$.
		
		\textbf{\emph{\underline{Case 1:}}} If $\frac{b-2}{2}\leq a$, then
		\begin{align*}
			\abs{M} &\leq b\binom{a}{2} + a\left[ \binom{b}{2} - e \right]  + \frac{e}{2}(b-2)\\
					&\leq b\binom{a}{2} + a\left[ \binom{b}{2} - e \right] + ea\\
					&= b\binom{a}{2} + a\binom{b}{2}
		\end{align*}
		which is the size of $\M$ in the first good candidate above.
		
		\textbf{\emph{\underline{Case 2:}}} If $\frac{b-2}{2}>a$, then
		\begin{align*}
			\abs{M} &\leq b\binom{a}{2} + a\left[ \binom{b}{2} - e \right] + \frac{e}{2}(b-2)\\
					&= b\binom{a}{2} + a\binom{b}{2} + e\left( \frac{b-2}{2} - a \right)\\
					&\leq b\binom{a}{2} + a\binom{b}{2} + \frac{b^2}{4}\left( \frac{b-2}{2} - a \right)\\
					&= b\binom{a}{2} + a\binom{b}{2} + b\binom{\frac{b}{2}}{2}-\frac{b^2}{4}a\\
					&= b\binom{a}{2} + a\left( \binom{b}{2} - \frac{b^2}{4} \right) + b\binom{\frac{b}{2}}{2}\\
					&= b\binom{a}{2} + 2a\binom{\frac{b}{2}}{2} + b\binom{\frac{b}{2}}{2}\\
					&= b\binom{a}{2} + (2a+b)\binom{\frac{b}{2}}{2}
		\end{align*}
		which is the size of $\M$ in the second good candidate above.
		
		This expression can be rewritten as
		\[
			\frac{b}{2}\binom{a}{2}+\frac{b}{2}\binom{\frac{b}{2}}{2}+a\binom{\frac{b}{2}}{2}+a\binom{\frac{b}{2}}{2}+\frac{b}{2}\binom{\frac{b}{2}}{2}+\frac{b}{2}\binom{a}{2}
		\]
		which is equal to
		\begin{equation}\tag{$\star\star$}\label{eqdoublestar}
			x\binom{y}{2}+x\binom{z}{2}+y\binom{x}{2}+y\binom{z}{2}+z\binom{x}{2}+z\binom{y}{2}
		\end{equation}
		when $x=z=\frac{b}{2}$ and $y=a$.  The expression in \eqref{eqdoublestar} is the number of induced $K_{1,2}$s in a complete tripartite graph with part sizes $x,y$, and $z$.  We know that $K_{\frac{n}{2},\frac{n}{2}}$ is the graph with $n$ vertices which has the maximum number of induced $K_{1,2}$s (Remark \ref{K12}. in Section \ref{sec:inducibility}), so \eqref{eqdoublestar} attains its maximum value when $x=z=\frac{n}{2}$ and $y=0$, so $b=n$ and $a=0$.  The size of $\M$ for the first candidate $a\binom{b}{2}+b\binom{a}{2}$ is the value of \eqref{eqdoublestar} when $x=a$, $y=b$, and $z=0$, so it attains its maximum value when $a=b=\frac{n}{2}$ and both good candidates have size
		\[
			2\cdot \frac{n}{2}\binom{\frac{n}{2}}{2}=\frac{n^2(n-2)}{8}=\frac{3}{4}\binom{n}{3}\frac{n}{n-1}
		\]
		and $\abs{\M}$ cannot be bigger.  Hence
		\[
			\frac{3}{4}\leq\lambda(W_2,3)\leq \pil(W_2,3)\leq\frac{3}{4}.
		\]
	\end{proof}
\end{theorem}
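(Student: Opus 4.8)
The plan is to prove matching lower and upper bounds. For the lower bound I would give an explicit partition-modular construction: split $[n] = A \cup B$ into two nearly-equal halves and take $S$ to be the set of all vertices whose weight within $A$ and whose weight within $B$ are both even. The point is that a sub-$3$-cube is good (its intersection with $S$ is an exact copy of $W_2$, namely two vertices at Hamming distance $2$) precisely when its three flip bits split as $(2,1)$ or $(1,2)$ between $A$ and $B$; fixing the parities of the non-flip coordinates in each block then forces $S$ to pick out exactly two vertices of the cube that differ in two coordinates. With $|A| \approx |B| \approx n/2$, a fraction tending to $\frac{3}{8} + \frac{3}{8} = \frac{3}{4}$ of all sub-$3$-cubes realize such a split, so $\lambda(W_2,3) \ge \frac{3}{4}$.

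For the upper bound I would pass to local density and use $\lambda(W_2,3) \le \pil(W_2,3) = \max\{\pili(W_2,3),\, \pilo(W_2,3)\}$, showing each local density is at most $\frac{3}{4}$. The recurring idea is to encode the good sub-$3$-cubes through a fixed vertex as induced copies of a three-vertex graph and then invoke the inducibility bound $i(K_{1,2}) = \frac{3}{4}$ from Section~\ref{sec:inducibility}. The case where the fixed vertex $\emptyset$ lies in $S$ is the clean one: a good cube through $\emptyset$ must have $S$-vertices $\{\emptyset, w\}$ with $w$ a weight-$2$ vertex and the remaining six cube-vertices outside $S$. Recording these $S$-pairs as the edges of an auxiliary graph $G_s$ on $[n]$, the requirement that the other two weight-$2$ vertices of each cube avoid $S$ makes every good cube an induced copy of the graph on three vertices with a single edge; since $i(H) = i(\overline{H})$, this is counted by $i(K_{1,2})$, so $\pili(W_2,3) \le \frac{3}{4}$.

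I expect the case $\emptyset \notin S$ to be the main obstacle, since the good cubes through $\emptyset$ no longer all have the same shape. Writing $A$ for the set of neighbors of $\emptyset$ that lie in $S$, $B = [n] \setminus A$, $a = |A|$ and $b = |B|$, I would sort the good cubes into three structural types according to where their two $S$-vertices sit: two weight-$1$ vertices (both flip bits in $A$, the third in $B$), one weight-$1$ and one weight-$3$ vertex, or two weight-$2$ vertices (all three flip bits in $B$). Counting the types separately gives at most $b\binom{a}{2}$ of the first kind, at most $a\bigl[\binom{b}{2} - e\bigr]$ of the second after recording the forbidden pairs as edges of a graph $G_s$ on $B$ with $e = |E(G_s)|$ edges, and, by Lemma~\ref{T2W2} applied to $G_s$, at most $\min\bigl\{b\binom{b/2}{2},\, \frac{e}{2}(b-2)\bigr\}$ of the third.

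The technical heart is then to optimize the resulting sum over $e$ and over $a,b$. After using monotonicity in $e$ to reduce to the range $e \le b^2/4$ and splitting on whether $\frac{b-2}{2} \le a$ or $\frac{b-2}{2} > a$, the maximum possible value rewrites---with part sizes $a$ and $b/2, b/2$---as the number of induced copies of $K_{1,2}$ in a complete tripartite graph on $n$ vertices. The inducibility bound for $K_{1,2}$, attained by the balanced complete bipartite graph $K_{n/2,n/2}$, caps this count at $\frac{3}{4}\binom{n}{3}$, giving $\pilo(W_2,3) \le \frac{3}{4}$. Combining the two cases yields $\frac{3}{4} \le \lambda(W_2,3) \le \pil(W_2,3) \le \frac{3}{4}$, as required.
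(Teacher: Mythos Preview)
Your proposal is correct and follows essentially the same approach as the paper's proof: the same partition-modular construction for the lower bound, the same local-density reduction with the auxiliary graph $G_s$ and the $i(K_{1,2})$ bound for $\emptyset\in S$, and for $\emptyset\notin S$ the same three-type decomposition, the same application of Lemma~\ref{T2W2}, the same case split on $\frac{b-2}{2}$ versus $a$, and the same final reinterpretation as the induced $K_{1,2}$ count in a complete tripartite graph.
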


We remark that in the construction we have with density $\frac{3}{4}$, of the vertices not in $S$, $\frac{2}{3}$ of them are in good $Q_3$'s only of the type of the first good candidate (those vertices which have an odd sum in precisely one of $A$ or $B$) and $\frac{1}{3}$ are in good $Q_3$'s only of the type of the second candidate (those vertices with an odd sum in both $A$ and $B$).  The local density at all vertices is $\frac{3}{4}$.

We suspect that no nontrivial $d$-cube density can be bigger than $\frac{3}{4}$ and that this is the only one bigger than $\frac{2}{3}$.

\begin{conj}
	If $H$ is a configuration in $Q_d$ such that $\frac{2}{3}<\ex(H,d)<1$ then $d=3$ and $H$ is an exact copy of $W_2$.
\end{conj}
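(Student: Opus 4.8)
The plan is to split the statement into two essentially independent tasks: (i) a finite classification inside $Q_3$ showing that $W_2$ (and its complement) is the only nontrivial configuration there whose $d$-cube density lies in the open interval $\left(\tfrac23,1\right)$, and (ii) a dimension-by-dimension argument ruling out every $Q_d$ with $d\neq 3$. For both parts the principal tool is the inequality $\ex(H,d)\leq\pil(H,d)$ together with the reduction of local density to a graph-inducibility problem exactly as in the proof of Theorem \ref{W2}. The configurations that resist this tool are precisely the layered ones, for which Theorem \ref{thm:local_trivial_layered} gives $\pil(H,d)=1$ and a genuinely global argument is needed.

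For part (i) I would run through the fourteen configurations of Figure \ref{fig:Q3}. Since the interval is open, a configuration is excluded as soon as its density is $\leq\tfrac23$ (or equals $1$). The two trivial layered configurations $W_1,W_{14}$ have density $1$, so they are out. For each \emph{non-layered} configuration one mimics Theorem \ref{W2}: fix a vertex $v$, record which pairs of neighbours of $v$ can complete a good sub-$3$-cube, and bound the count by the inducibility of the associated three-vertex pattern, whose value is at most $i(K_{1,2})=\tfrac34$; one must then check that every such configuration other than $W_2$ actually forces a pattern of inducibility $\leq\tfrac23$ (for example $W_{11}$ yields $2K_2$ with $i=\tfrac38$, already handled in \cite{perfectpathpaper}). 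The real difficulty is the four nontrivial \emph{layered} configurations $W_3,W_7,W_8,W_{12}$: here $\pil=1$, so the local method is useless and one must establish $\ex(W_3,3)\leq\tfrac23$, $\ex(W_7,3)\leq\tfrac23$, $\ex(W_8,3)\leq\tfrac23$, $\ex(W_{12},3)\leq\tfrac23$ by hand — bounds currently known only to within $10^{-9}$ through Baber's flag algebras. Note that $W_8$ sits at exactly $\tfrac23$, so openness of the interval already excludes it once its density is shown to be $\leq\tfrac23$.

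For part (ii) the case $d\leq 2$ is again finite. The dimension $d=1$ and the configurations $Z_1,Z_4$ (trivial) and $Z_3$ (where $\ex(Z_3,2)=\tfrac12$ by Theorem \ref{thm:V_3_in_Q_2}) are immediate. The crux is $Z_2$: the conjecture is simply false unless $\ex(Z_2,2)\leq\tfrac23$, whereas at present only Baber's bound $0.685714$ is available. A rigorous proof that $\ex(Z_2,2)=\tfrac23$ — presumably by sharpening the single-neighbourhood estimate of Theorem \ref{thm:V_3_in_Q_2} with a second-neighbourhood or convexity refinement — is therefore an unavoidable prerequisite.

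The main obstacle, and the reason the statement is posed as a conjecture, is the range $d\geq 4$, where no finite list of configurations is available. Here I would attempt to show that every nontrivial configuration in $Q_d$ has density at most $\tfrac23$ by an induction on $d$ that exploits the averaging identity expressing $\ex(H,d,n)$ as a mean over the $2n$ sub-$(n-1)$-cubes, combined with the monotonicity already used to define $\ex(H,d)$; the aim is to control the density of $H$ through the densities of its restrictions to sub-$(d-1)$-cubes and thus propagate the bound from the base cases. For non-layered $H$ the inducibility bound $\pil(H,d)\leq i(\cdot)$ remains available, but for layered $H$ Theorem \ref{thm:local_trivial_layered} again forces $\pil(H,d)=1$ and this route collapses, so a genuinely global tool — either a flag-algebra computation or a new blow-up/compactness argument bounding layered densities by the $\binom{d}{i}\,i^i(d-i)^{d-i}/d^d$-type quantities of Theorem \ref{regularfamily} — appears indispensable. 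I expect bounding the layered configurations uniformly in $d$ to be the single hardest step.
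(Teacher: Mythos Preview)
The statement you are addressing is a \emph{conjecture}: the paper offers no proof of it whatsoever, only the evidence surrounding it (Theorem~\ref{W2} giving $\ex(W_2,3)=\tfrac34$, the flag-algebra upper bounds in Tables~\ref{bestQ2s} and~\ref{bestQ3s}, and the remark that no other density above $\tfrac23$ has been found). So there is no ``paper's own proof'' to compare your proposal against.

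What you have written is not a proof but an honest research outline, and you yourself recognise this (``the reason the statement is posed as a conjecture''). Your identification of the obstructions is accurate and matches the paper's own assessment: the non-layered configurations can in principle be handled via $\pil$ and inducibility bounds, whereas the layered ones (in particular $W_3,W_7,W_8,W_{12}$ in $Q_3$, and $Z_2$ in $Q_2$) defeat the local method entirely by Theorem~\ref{thm:local_trivial_layered}. Your observation that the conjecture already \emph{requires} $\ex(Z_2,2)\leq\tfrac23$ --- currently unproved, with only Baber's $0.685714$ available --- is exactly right and is one of the open cases the paper leaves unresolved. Likewise your point that $d\geq 4$ demands a uniform argument with no finite case-check is the genuine content of the conjecture.

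In short: there is no gap in your reasoning, but there is also no proof here --- nor does the paper claim one. Your proposal should be read as a correct diagnosis of why the statement remains open, not as a proof attempt to be graded against the paper.
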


We will use a \emph{blow-up} (introduced in \cite{perfectpathpaper}) of the miraculous configuration $T$ in $Q_6$ shown in Figure \ref{fig:Q6config} to get a lower bound for $\lambda(W_{10},3)$. Let $H$ be a configuration in $Q_d$.  We say the configuration $S$ in $Q_n$ is a blow-up of $H$ if it is obtained as follows.  We partition $[n]$ into parts $A_1,A_2,\ldots,A_d$.  For each $v=(x_1,x_2,\ldots,x_n)$ in $V_n$ we define a vector $u=(y_1,y_2,\ldots,y_d)$ in $V_d$ by $y_j\equiv \sum_{i\in A_j}x_i$ mod 2, and put $v$ in $S$ if and only if $u\in H$.  Any sub-$d$-cube of $Q_n$ which has one flip bit in each $A_j$ will have an exact copy of $H$ (and perhaps others will as well).  Using an equipartition of $[n]$ into $d$ parts shows that $\lambda(H,d)\geq \frac{d!}{d^d}$ for each $H$ in $Q_d$ (Proposition 6 of \cite{perfectpathpaper}).

\begin{prop}\label{prop:W10}
	$\ex(W_{10},3)\geq \frac{5}{12}$
	\begin{proof}
		Let $S$ be a blow-up in $Q_n$ of the configuration $T$ in $Q_6$ shown in Figure \ref{fig:Q6config}.  If $n$ is large and $[n]$ is equipartitioned into 6 parts, then the probability that a randomly chosen sub-3-cube has its 3 flip bits in different parts is $\frac{6}{6}\cdot\frac{5}{6}\cdot\frac{4}{6} = \frac{5}{9}$.  To complete the proof we need to show that $\frac{3}{4}$ of the $\binom{6}{3}\cdot 2^3=160$ sub-3-cubes of $Q_6$ have an intersection with $T$ which is an exact copy of $W_{10}$, since $\frac{5}{9}\cdot\frac{3}{4}=\frac{5}{12}$.
		
		\begin{figure}[htbp]
			\centering
			\begin{tikzpicture}[grow'=up,
								sibling distance=30mm,
								level distance=10mm,
								edge from parent/.style={}]
				\node {$\emptyset$}
					child {node {$1$}
						child {node {}
							child{node {124 146 163 135 152}
								child{node {1234 1456 1623 1345 1562}
									 }
								 }
								}
						}
					child {node {}
						child {node{}
							child{node{}
								child {node {}
									child {node {}
										child {node {123456}}
										}
									}
								}
							}
						}
					child {node {}
						child {node{23 34 45 56 62}
							child {node {235 346 452 563 624}
								child {node {}
									child {node {23456}}
									}
								}
							}
						};
			\end{tikzpicture}
			\caption{The configuration $T$ in $Q_6$ used for Proposition \ref{prop:W10}}
			\label{fig:Q6config}
		\end{figure}
		
		It is not hard to check that for each $u,v$ in $T$, there are 10 automorphisms of $Q_6$ which map $u$ to $v$ and $T$ to itself.  For example, if $\psi$ is such an automorphism such that $\psi(\emptyset)=\emptyset$, then $\psi(1)=1$, $\psi(2)=j$ where $j\in\{$2,3,4,5,6$\}$, $\psi(3)=j+1$ ($\psi(3)=2$ if $j=6$) or $\psi(3)=j-1$ ($\psi(3)=6$ if $j=2$), and that completely determines $\psi$.  That means the group of automorphisms of $Q_6$ which fix the set $T$ is vertex transitive within $T$ and outside of $T$.  Hence it suffices to show that 15 of the 20 sub-3-cubes containing a vertex in $T$ (say $\emptyset$) have exact copies of $W_{10}$ and that 15 of the 20 sub-3-cubes containing a vertex not in $T$ (say 2) have exact copies of $W_{10}$.
		
		Of the 20 sub-3-cubes containing $\emptyset$, only the ones containing $234,345,456,562$, and $623$ do not have exact copies of $W_{10}$.  For example, the one with flip bits 2,3, and 5 contains the vertices $\emptyset$, 23, and 235 of $T$, the one with flip bits 1,2, and 3 contains the vertices $\emptyset$, 1, and 23 of $T$, and the one with flip bits 1, 2, and 4 contains the vertices $\emptyset$, 1, and 124 of $T$.
		
		Of the 20 sub-3-cubes containing 2, the five which do not have exact copies of $W_{10}$ are the ones with flip bits 1,3,6;\ 1,4,5;\ 2,3,4;\ 2,3,6;\ 2,5,6.  For example, the one with flip bits 1,3,5 contains the vertices 23,125, and 235 of $T$ and the one with flip bits 2,3,5 contains the vertices $\emptyset$, 23, and 235 of $T$.
	\end{proof}
\end{prop}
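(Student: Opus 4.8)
The plan is to realize $S$ as a blow-up of the configuration $T\subset Q_6$ of Figure \ref{fig:Q6config}, using an equipartition of $[n]$ into six parts $A_1,\dots,A_6$, and then to collapse the whole problem to a finite count inside $Q_6$. First I would recall the defining property of a blow-up: every sub-$3$-cube of $Q_n$ whose three flip bits lie in three distinct parts $A_j$ inherits exactly the intersection pattern with $T$ of the corresponding sub-$3$-cube of $Q_6$. Under the equipartition, as $n\to\infty$ the fraction of sub-$3$-cubes with flip bits in three distinct parts tends to $\frac{6}{6}\cdot\frac{5}{6}\cdot\frac{4}{6}=\frac{5}{9}$, and I would simply discard the remaining sub-$3$-cubes. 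Hence $\ex(W_{10},3)\ge \frac{5}{9}\,p$, where $p$ is the fraction of the $\binom{6}{3}2^3=160$ sub-$3$-cubes $R$ of $Q_6$ for which $T\cap R$ is an exact copy of $W_{10}$, and it remains only to show $p=\frac34$, since $\frac59\cdot\frac34=\frac{5}{12}$.

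The engine for computing $p$ is the symmetry of $T$. I would first establish that the subgroup $\Gamma\le\operatorname{Aut}(Q_6)$ fixing $T$ setwise acts transitively both on $T$ and on $V(Q_6)\setminus T$. The configuration is built so that coordinate $1$ is distinguished while coordinates $2,3,4,5,6$ carry a pentagonal structure: the weight-$2$ vertices $23,34,45,56,62$ are the edges of the $5$-cycle on $\{2,3,4,5,6\}$, the weight-$3$ vertices $124,146,163,135,152$ adjoin $1$ to its distance-$2$ chords, and the remaining layers rotate accordingly. The cyclic rotation $2\mapsto3\mapsto4\mapsto5\mapsto6\mapsto2$ (fixing coordinate $1$) and the global complementation (flip all six bits, sending $\emptyset$ to $123456$) are automorphisms preserving $T$, and from the explicit stabilizer computation indicated after the figure---the stabilizer of $\emptyset$ in $\Gamma$ has order $10$, and is pinned down by the image of coordinate $2$ together with a choice of orientation---one sees that $\emptyset$ is carried to each of the $24$ vertices of $T$ by exactly $10$ elements, so $\Gamma$ acts transitively on $T$ and $\abs{\Gamma}=240$. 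A parallel inspection gives transitivity on the $40$ vertices outside $T$.

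Granting transitivity, I would finish by a double count. By transitivity every vertex of $T$ lies in the same number $g_{\mathrm{in}}$ of good sub-$3$-cubes and every vertex outside $T$ lies in the same number $g_{\mathrm{out}}$, so it suffices to test one representative of each orbit. I would verify by direct inspection that $g_{\mathrm{in}}=g_{\mathrm{out}}=15$: taking $\emptyset\in T$, exactly the five sub-$3$-cubes with flip-bit triples $\{2,3,4\},\{3,4,5\},\{4,5,6\},\{5,6,2\},\{6,2,3\}$ fail to give an exact copy of $W_{10}$, while the other $15$ of the $20$ succeed; taking the vertex $2\notin T$, exactly the five triples $\{1,3,6\},\{1,4,5\},\{2,3,4\},\{2,3,6\},\{2,5,6\}$ fail. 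Consequently every one of the $64$ vertices of $Q_6$ lies in exactly $15$ good sub-$3$-cubes, and since each good sub-$3$-cube has $8$ vertices, counting incidences gives $64\cdot15=8\cdot(\#\text{ good sub-}3\text{-cubes})$, so there are $120=\frac34\cdot160$ good sub-$3$-cubes, i.e.\ $p=\frac34$, and the bound $\ex(W_{10},3)\ge\frac{5}{12}$ follows.

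The main obstacle is the symmetry claim together with its finite verification. I must confirm that $\Gamma$ is genuinely transitive on \emph{both} orbits (so that a single in-vertex and a single out-vertex really suffice), and then carry out the two $20$-case checks without error: for each relevant sub-$3$-cube $R$ I have to identify the three-vertex pattern $T\cap R$ and recognize precisely when it is an exact copy of $W_{10}$ rather than one of the other configurations in $Q_3$. Everything else is bookkeeping, and it is exactly the coincidence $g_{\mathrm{in}}=g_{\mathrm{out}}=15$ that lets the double count convert the two local fractions $15/20$ into the global $\frac34$.
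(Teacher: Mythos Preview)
Your proposal is correct and follows essentially the same route as the paper's own proof: blow up the configuration $T\subset Q_6$ with an equipartition into six parts, reduce to showing that $120$ of the $160$ sub-$3$-cubes of $Q_6$ are good, exploit vertex-transitivity of the stabilizer of $T$ on both $T$ and its complement to reduce to checking $\emptyset$ and the vertex $2$, and verify $15/20$ in each case. The only cosmetic difference is that you spell out the incidence double count $64\cdot 15 = 8\cdot 120$ explicitly, whereas the paper leaves that step implicit in the phrase ``it suffices to show that $15$ of the $20$ \dots''.
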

		
		We remark that the generic lower bound $\ex(H,d)\geq \frac{d!}{d^d}$ is attained by $Z_3$ in $Q_2$ and by the perfect 8-cycle $C_4$ in $Q_4$, but not by any configuration in $Q_3$, since $\ex(H,3)\geq \frac{8}{27}$ for all configurations $H$ in $Q_3$.




\section{Configurations in $Q_4$} 
\label{sec:configurations_in_q_4}

 In \cite{perfectpathpaper} we determined the $d$-cube-density of the perfect 8-cycle $C_8$ in $Q_4$.  In this section, we will determine the $d$-cube-density for two other configurations in $Q_4$.

\begin{theorem}\label{38inQ4}
	If $Y$ is the configuration $\{0000,1100,0011,1111\}$ in $Q_4$ (see Figure \ref{Q4config}), then $\ds\lambda(Y,4)=\frac{3}{8}$.
	
	\begin{figure}[htbp]
		\centering
		\begin{tikzpicture}[scale=2,inner sep=.5mm]
			\foreach \i in {1,2} {
				\node (a\i) at (\i,1+\i/2.5) {};
				\node (b\i) at (\i+.5,1+\i/2.5) {};
				\node (c\i) at (\i+.5,1.5+\i/2.5) {};
				\node (d\i) at (\i,1.5+\i/2.5) {};
				\node (e\i) at (\i+.2,1.2+\i/2.5) {};
				\node (f\i) at (\i+.7,1.2+\i/2.5) {};
				\node (g\i) at (\i+.7,1.7+\i/2.5) {};
				\node (h\i) at (\i+.2,1.7+\i/2.5) {};
			}
			\foreach \i in {1,2} {
				\draw [thick, blue] (a\i) rectangle (c\i);
				\draw [thick, blue] (e\i) rectangle (g\i);
					\foreach \front/\back in {a/e,b/f,c/g,d/h}{
						\draw [thick, blue] (\front\i) -- (\back\i);
					}
					\foreach \letter in {a,b,c,d,e,f,g,h}{
						\draw [dashed,blue] (\letter1) -- (\letter2);
					}
			}
			\foreach \i in {1,2} {
			\foreach \j in {a,b,c,d,e,f,g,h} {
			\filldraw[thick,draw=blue,fill=white] (\j\i) circle (.3mm);
			}}
			\fill[fill=red] (a1) circle[radius=.75mm];
			\fill[fill=red] (c1) circle[radius=.75mm];
			\fill[fill=red] (e2) circle[radius=.75mm];
			\fill[fill=red] (g2) circle[radius=.75mm];
		\end{tikzpicture}
		\caption{The configuration $Y$.}
		\label{Q4config}
	\end{figure}
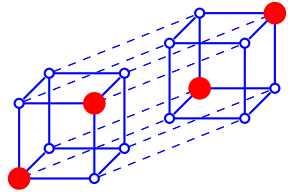

	\begin{proof}
		First we give a construction to show $\ex(Y,4)\geq \frac{3}{8}$. Partition $[n]$ into sets $A$ and $B$ and let $S$ be the set of all vertices in $Q_n$ given by binary $n$-tuples with even weight in both $A$ and $B$.  This gives a ``good'' $Q_4$ for each $Q_4$ with two flip bits in each of $A$ and $B$.  If it is an equipartition and $n$ is large then $\frac{3}{8}$ of the $Q_4$s are ``good''.
		
		Suppose $\emptyset\in S$ and let $\M$ be the set of good $Q_4$s containing $\emptyset$. We construct a graph $G_s$ with $V(G_s)=[n]$ and $E(G_s)=\{uv:\emptyset,uv,xy,uvxy$ are the vertices in $S$ of some $M\in\M \}$.  If $uv$ and $xy$ are in $M\in\M$, then neither $ux,uy,vx$, nor $vy$ can be in $E(G_s)$, so $\abs{\M}$ is less than or equal to the number of induced copies of $2K_2$ in $G_s$.  That means $\pili(Y)\leq i(2K_2)=\ds\frac{3}{8}$.
		
		Now suppose $\emptyset\not\in S$.  Let $A=\{i\in[n]:i\in S\}$, $B=[n]\setminus A$, $\abs{A}=a$, and $\abs{B}=b$.  Let $\M$ be the set of all good $Q_4$s containing $\emptyset$.  If $M\in\M$, then the four vertices of $M$ in $S$ have the structure of Type I or Type II in Figure \ref{T12Q4}, where $i,j,u,v,x,y\in[n]$ with $i$ and $j$ in $S$, but $u,v,x,$ and $y$ not in $S$.
		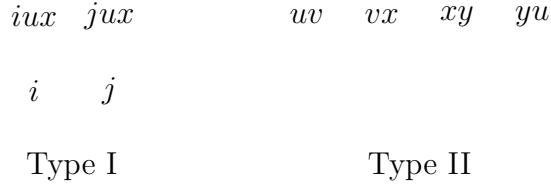
\begin{figure}[htbp]
			\centering
			\begin{tikzpicture}[parent anchor=east,child anchor=west,grow=north,
				sibling distance=10mm, level distance=10mm,
				every node/.style={text=black},
				edge from parent/.style={}]
				\node {Type I}
					child {node {$j$}
						child {node {$jux$}}
						}
					child {node {$i$}
						child {node {$iux$}}
						};
			\end{tikzpicture}
			\qquad\qquad
			\begin{tikzpicture}[grow=north,
				sibling distance=10mm, level distance=10mm,
				every node/.style={text=black},
				edge from parent/.style={}]
				\node {Type II}
					child {node {}
						child {node {$yu$}}
						child {node {$xy$}}
						child {node {$vx$}}
						child {node {$uv$}}
					};
			\end{tikzpicture}
			\caption{The two possible structures of vertices in $S$ for $M\in\M$ where $\emptyset\not\in S$.}
			\label{T12Q4}
		\end{figure}
		
		Define a graph $G_s$ by $V(G_s)=B$ and $E(G_s)=\{uv:uv,vx,xy,yu$ are the vertices in $S$ of some Type II $M\in\M$ for some $x,y$ in $[n]\}$.  For such an $M$, neither $ux$, nor $vy$ can be in $S$, so the number of Type II $Q_4$s in $\M$ is at most the number of induced copies of $K_{2,2}$ in $G_s$.
		
		\begin{lem}\label{T2Q4}
			Let $G$ be a graph with $n$ vertices where $n$ is even.  If $\abs{E(G)}=e$, then $G$ has at most $\min\left\{\binom{\frac{n}{2}}{2}^2,\frac{e}{4}\frac{(n-2)^2}{4}\right\}$ induced copies of $K_{2,2}$.
			\begin{proof}
				That it has at most $\binom{\frac{n}{2}}{2}^2$ copies of $K_{2,2}$ is proved in \cite{BS:1994} and \cite{Bollobas:1986bfa} (the optimizing graph is $K_{\frac{n}{2},\frac{n}{2}}$).  If $uv\in E(G)$, define an auxiliary graph $F$ with $V(F)=V(G)\setminus\{u,v\}$ and $E(F)=\{xy:\{u,v,x,y\}$ induces $K_{2,2}\}$.  The graph $F$ is triangle-free since if $\{u,v,x,y\}$ and $\{u,v,x,z\}$ both induce $K_{2,2}$, then either $\{uy,uz\}\subseteq E(G)$ or $\{vy,vz\}\subseteq E(G)$.  In either case, $\{u,v,y,z\}$ induces $K_{1,3}$.  Since $F$ is triangle free, by Tur{\'a}n's theorem, $uv$ is in at most $\frac{(n-2)^2}{4}$ induced $K_{2,2}$s.  Finally, summing over all edges $uv$ counts each $K_{2,2}$ four times.
			\end{proof}
		\end{lem}
		
		If $L$ is a good Type I in $\M$ where $i,j,iux$, and $jux$ are the vertices of $L$ in $S$, then $i,j\in A$, $u,x\in B$, but $ux\not\in E(G_s)$.  If $\abs{E(G_s)}=e$, then  the number of Type I $Q_4$s in $\M$ is at most $\left[\binom{b}{2}-e\right]\binom{a}{2}$ and of Type II, by Lemma \ref{T2Q4}, is at most $\min\left\{\binom{\frac{b}{2}}{2}^2,\frac{e}{16}(b-2)^2\right\}$ (with slight modification if $b$ is odd).  If $a$ and $b$ are fixed, then one good candidate to maximize $\abs{\M}$ is for $\M$ to have no Type II $Q_4$s.  Then $G_s$ has no edges and $\abs{M}=\binom{a}{2}\binom{b}{2}$.  Another good candidate is when $\M$ has the maximum possible number of Type II $Q_4$s.  Then $G_s$ is $K_{\frac{b}{2},\frac{b}{2}}$ (assuming $b$ is even) and $\abs{\M} = \left[ \binom{b}{2} - \frac{b^2}{4} \right] \binom{a}{2} + \binom{\frac{b}{2}}{2}^2 = \frac{b(b-2)}{4} \binom{a}{2} + \binom{\frac{b}{2}}{2}^2$.
		
		If $e=E(G_s)$, we have $\abs{\M} \leq \left[ \binom{b}{2} - e \right] \binom{a}{2} + \min\left\{ \binom{\frac{b}{2}}{2}^2, \frac{e}{4} \frac{(b-2)^2}{4} \right\}$.
		
		\textbf{\emph{\underline{Case 1:}}} If $e\geq \frac{b^2}{4}$, then
		\begin{align*}
			\abs{\M} &\leq \left[ \binom{b}{2}-\frac{b^2}{4} \right] \binom{a}{2} + \binom{\frac{b}{2}}{2}^2\\
					&= \frac{b(b-2)}{4} \binom{a}{2} + \binom{\frac{b}{2}}{2}^2
		\end{align*}
		which is the size of $\M$ in the second good candidate above.
		
		\textbf{\emph{\underline{Case 2:}}} If $e<\frac{b^2}{4}$, then
		\begin{align*}
			\abs{\M} &\leq \left[ \binom{b}{2} - e \right] \binom{a}{2} + \frac{e}{16}(b-2)^2\\
					 &= \binom{a}{2} \binom{b}{2} + e \left[ \frac{1}{16}(b-2)^2 - \binom{a}{2} \right].
		\end{align*}
		
		If $\frac{1}{16}(b-2)^2\leq \binom{a}{2}$, then $\abs{\M}\leq \binom{a}{2} \binom{b}{2}$ which is the size of $\M$ in the first good candidate above.
		
		If $\frac{1}{16}(b-2)^2 > \binom{a}{2}$, then
		\begin{align*}\label{upper}
			\abs{\M} &< \binom{a}{2} \binom{b}{2} + \frac{b^2}{4} \left[ \frac{1}{16}(b-2)^2 - \binom{a}{2} \right]\\
					 &= \binom{a}{2} \left[ \binom{b}{2} - \frac{b^2}{4} \right] + \left(\frac{b(b-2)}{8}\right)^2\\
					 &= \frac{b(b-2)}{4} \binom{a}{2} + \binom{\frac{b}{2}}{2}^2 \tag{$*$}
		\end{align*}
		
		the same upper bound as in Case 1.
		
		Clearly the maximum value of $\binom{a}{2}\binom{b}{2}$ is $\binom{\frac{n}{2}}{2}\binom{\frac{n}{2}}{2} = \frac{n^2(n-2)^2}{64} = \frac{3}{8}\binom{n}{4}\frac{n(n-2)}{(n-1)(n-3)}$ (with a slight modification if $n$ is odd).
		
		\begin{lem}
			If $n$ is even and $x, y$, and $z$ are nonnegative integers such that $x+y+z=n$, then the maximum value of
			\begin{equation*}\tag{$**$}\label{binsum}
				\binom{x}{2}\binom{y}{2}+\binom{x}{2}\binom{z}{2}+\binom{y}{2}\binom{z}{2}
			\end{equation*}
			is $\binom{\frac{n}{2}}{2}^2$.
			\begin{proof}
				This function counts the number of induced copies of $K_{2,2}$ in a complete tripartite graph with parts $X,Y,$ and $Z$ with part sizes $x,y,$ and $z$, respectively, subject to the constraint $x+y+z=n$.  In \cite{BS:1994}, it was shown that for all $n\geq 4$ the maximum number of induced copies of $K_{2,2}$ in any graph is $\binom{\frac{n}{2}}{2}^2$.
			\end{proof}
		\end{lem}

		If $x=a$ and $y=z=\frac{b}{2}$, then \eqref{binsum} reduces to \eqref{upper}, so the maximum of \eqref{upper} occurs when $a=0$ and $b=n$ and is equal to $\frac{3}{8}\binom{n}{4}\frac{n(n-2)}{(n-1)(n-3)}$.  Hence, $\frac{3}{8}$ is an upper bound for $\pilo(Y)$ and $\pili(Y)$, so $\frac{3}{8}\leq\lambda(Y,4)\leq\pil(Y,4)\leq\frac{3}{8}$.
		
	\end{proof}
\end{theorem}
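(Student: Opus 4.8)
The plan is to mirror the proof of Theorem \ref{W2} almost verbatim, with the graph $K_{1,2}$ and its inducibility replaced by $2K_2$ (for the ``origin-in'' case) and by $K_{2,2}$ (for the ``origin-out'' case), and with Lemma \ref{T2W2} replaced by an edge-weighted bound on induced copies of $K_{2,2}$. For the lower bound I would partition $[n]=A\cup B$ as evenly as possible and take $S$ to be the vertices of even weight in both $A$ and $B$; then any sub-$4$-cube with exactly two flip bits in $A$ and two in $B$ is an exact copy of $Y$, and the fraction of such cubes tends to $\binom{4}{2}(1/2)^{4}=\frac{3}{8}$, so $\ex(Y,4)\ge\frac{3}{8}$.

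For the upper bound I bound the local density $\pil(Y,4)$. Since $Y$ is vertex-transitive, when $\emptyset\in S$ a good $Q_4$ through $\emptyset$ has its four $S$-vertices in the positions $\emptyset,\,uv,\,xy,\,uvxy$ for flip bits $u,v,x,y$, all twelve remaining vertices being outside $S$. Recording the matching $\{uv,xy\}$ as two edges of a graph $G_s$ on $[n]$, the conditions $ux,uy,vx,vy\notin S$ force $\{u,v,x,y\}$ to induce exactly $2K_2$; hence the number of good cubes through $\emptyset$ is at most the number of induced $2K_2$s in $G_s$, and $\pili(Y)\le i(2K_2)=\frac{3}{8}$ by Section \ref{sec:inducibility}.

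The substantive case is $\emptyset\notin S$. Let $A$ be the set of singletons lying in $S$, $B=[n]\setminus A$, $a=\abs{A}$, $b=\abs{B}$. I would classify the good cubes through $\emptyset$ by the position of their four $S$-vertices and expect exactly two shapes: a Type~I cube with $S$-vertices $i,j,iux,jux$ where $i,j\in A$ and $u,x\in B$, and a Type~II cube whose four $S$-vertices form a four-cycle $uv,vx,xy,yu$ of weight-two vertices with $u,v,x,y\in B$. Let $G_s$ be the graph on $B$ whose edges are the weight-two $S$-vertices occurring in Type~II cubes; then a Type~II cube is an induced $K_{2,2}$ of $G_s$ (its diagonals $ux,vy$ are absent), while each Type~I cube forces $ux\notin E(G_s)$, giving at most $[\binom{b}{2}-e]\binom{a}{2}$ Type~I cubes where $e=\abs{E(G_s)}$. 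The key auxiliary step, the analogue of Lemma \ref{T2W2}, is that a graph with $e$ edges has at most $\min\{\binom{b/2}{2}^{2},\,\tfrac{e}{16}(b-2)^{2}\}$ induced $K_{2,2}$s; I would prove the second bound by fixing an edge and observing that the $K_{2,2}$-completions of it span a triangle-free auxiliary graph (two completions sharing an edge would produce a $K_{1,3}$), then applying Tur\'an's theorem and correcting for the fourfold overcount.

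Summing the Type~I and Type~II bounds and splitting on whether $e\ge b^2/4$, a short optimization should show that $\abs{\M}$ is maximized either with no Type~II cubes, value $\binom{a}{2}\binom{b}{2}$, or with $G_s=K_{b/2,b/2}$, value $\tfrac{b(b-2)}{4}\binom{a}{2}+\binom{b/2}{2}^{2}$. Both are instances of the complete-tripartite count $\binom{x}{2}\binom{y}{2}+\binom{x}{2}\binom{z}{2}+\binom{y}{2}\binom{z}{2}$ with $x+y+z=n$, which counts induced $K_{2,2}$s in a complete tripartite graph and is therefore maximized at $\binom{n/2}{2}^{2}=\frac{3}{8}\binom{n}{4}\frac{n(n-2)}{(n-1)(n-3)}$, since $K_{n/2,n/2}$ maximizes induced $K_{2,2}$s (Section \ref{sec:inducibility}). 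This yields $\pilo(Y)\le\frac{3}{8}$, and with the two earlier bounds gives $\frac{3}{8}\le\ex(Y,4)\le\pil(Y,4)\le\frac{3}{8}$. I expect the main obstacle to be the origin-out analysis: verifying that Types~I and~II are the only shapes, isolating the non-edge constraints so that both types are controlled by the single parameter $e$, and proving the edge-weighted $K_{2,2}$ bound.
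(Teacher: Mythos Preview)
Your proposal is correct and follows essentially the same approach as the paper's proof: the same lower-bound construction, the same $2K_2$ reduction for $\emptyset\in S$, the same Type~I/Type~II classification for $\emptyset\notin S$ with the graph $G_s$ on $B$, the same edge-weighted $K_{2,2}$ lemma (proved via the triangle-free auxiliary graph and Tur\'an), and the same complete-tripartite optimization at the end. Your anticipated obstacle---verifying that only Types~I and~II occur and that the single parameter $e$ controls both---is exactly where the paper does its work, and your outline handles it the same way.
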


\begin{theorem}\label{thm:halfdensity}
	If $Z$ is the configuration $\{ 0000,1100,1010,0110 \}$ in $Q_4$ (see Figure \ref{fig:halfdensity}), then $\lambda(Z,4)=\frac{1}{2}$.
	\begin{figure}[htbp]
		\centering
		\begin{tikzpicture}[scale=2,inner sep=.5mm]
			\foreach \i in {1,2} {
				\node (a\i) at (\i,1+\i/2.5) {};
				\node (b\i) at (\i+.5,1+\i/2.5) {};
				\node (c\i) at (\i+.5,1.5+\i/2.5) {};
				\node (d\i) at (\i,1.5+\i/2.5) {};
				\node (e\i) at (\i+.2,1.2+\i/2.5) {};
				\node (f\i) at (\i+.7,1.2+\i/2.5) {};
				\node (g\i) at (\i+.7,1.7+\i/2.5) {};
				\node (h\i) at (\i+.2,1.7+\i/2.5) {};
			}
			\foreach \i in {1,2} {
				\draw [thick, blue] (a\i) rectangle (c\i);
				\draw [thick, blue] (e\i) rectangle (g\i);
					\foreach \front/\back in {a/e,b/f,c/g,d/h}{
						\draw [thick, blue] (\front\i) -- (\back\i);
					}
					\foreach \letter in {a,b,c,d,e,f,g,h}{
						\draw [dashed,blue] (\letter1) -- (\letter2);
					}
			}
			\foreach \i in {1,2} {
			\foreach \j in {a,b,c,d,e,f,g,h} {
			\filldraw[thick,draw=blue,fill=white] (\j\i) circle (.3mm);
			}}
			\fill[fill=red] (a1) circle[radius=.75mm];
			\fill[fill=red] (c1) circle[radius=.75mm];
			\fill[fill=red] (f1) circle[radius=.75mm];
			\fill[fill=red] (h1) circle[radius=.75mm];
		\end{tikzpicture}
		\caption{The configuration $Z$ for Theorem \ref{thm:halfdensity}.}
		\label{fig:halfdensity}
	\end{figure}
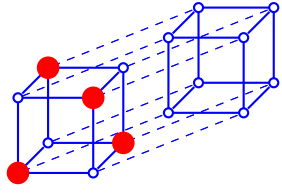
	\begin{proof}
		The construction to show $\lambda(Z,4)\geq \frac{1}{2}$ is similar to the one for $Y$ in Theorem \ref{38inQ4}.  Partition $[n]$ into sets $A$ and $B$ of size $\ceil{\frac{n}{2}}$ and $\floor{\frac{n}{2}}$ and let $S$ be the set of all veertices in $Q_n$ given by binary $n$-tuples with even weight in both $A$ and $B$.  This gives a good $Q_4$ for each $Q_4$ with three flip bits in $A$ and one in $B$ or one flip bit in $A$ and three in $B$. In the limit as $n$ goes to infinity, $\frac{4}{16}+\frac{4}{16}=\frac{1}{2}$ of the $Q_4$'s will be ``good'', so $\lambda(H,4)\geq \frac{1}{2}$.
		
		For the upper bound, suppose $\emptyset\in S$ and let $\M$ be the set of good $Q_4$s containing $\emptyset$.  We define a graph $G_S$ with $V(G_S)=[n]$ and $E(G_S)=\{xy : \emptyset,xy,yz,xz$ are the vertices in $S$ of some $M\in\M$ for some $z\in[n]\}$.  If $x,y,z,w$ are the coordinates of a good $Q_4$ where $\emptyset,xy,yz,xz$ are the vertices in $S$, then $wx,wy,wz$ are not in $E(G_S)$, so $\{w,x,y,z\}$ induces $K_3$ plus an isolated vertex in $G_S$.  Since this is the complement of $K_{1,3}$, $\pilin(H,4)=i(K_{1,3})=\frac{1}{2}$.
		
		Now suppose $\emptyset\not\in S$.  Let $A=\{i\in [n]:i\in S\}$, $B=[n]\setminus A$, $\abs{A}=a$, $\abs{B}=b$.  Let $\M$ be the set of all good $Q_4$s containing $\emptyset$.  If $M\in\M$ then the four vertices of $M$ in $S$ have the structure of Type I, Type II, or Type III in Figure \ref{fig:halfdensitytypes} (where $i,j,k\in A$ and $w,x,y,z\in B$).
		
		\begin{figure}[htbp]
			\centering
			\begin{tikzpicture}[grow'=up,
								sibling distance=10mm,
								level distance=10mm,
								edge from parent/.style={}]
				\node {Type I}
					child {node {$i$}}
					child {node {$j$}
						child {node {$ijk$}}
					}
						child {node {$k$}};
			\end{tikzpicture}
			\qquad\qquad
			\begin{tikzpicture}[grow'=up,
								sibling distance=10mm,
								level distance=10mm,
								edge from parent/.style={}]
				\node {Type II}
					child {node {$i$}
							child {node {$ixy$}}
							child {node {$iyz$}}
							child {node {$ixz$}}
					};
			\end{tikzpicture}
			\qquad\qquad
			\begin{tikzpicture}[grow'=up,
								sibling distance=10mm,
								level distance=10mm,
								edge from parent/.style={}]
				\node {Type III}
					child {node {$wx$}}
					child {node {$wy$}
						child {node {$wxyz$}}
					}
					child {node {$wz$}};
			\end{tikzpicture}
			\caption{The three structures of vertices in $S$ for $M\in\M$ where $\emptyset\not\in S$.}
			\label{fig:halfdensitytypes}
		\end{figure}
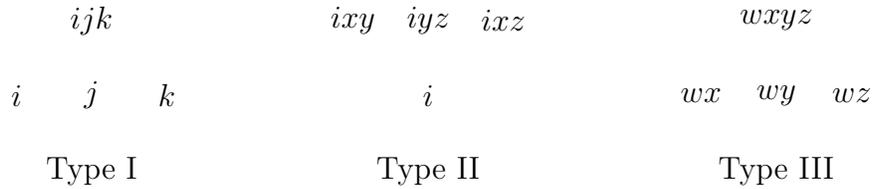
		
		Define a graph $G$ by $V(G)=A\cup B$ and $E(G)=\{ix:i\in A\textrm{ and }\ab x\in B\} \cup \ab \{wx: wx,\ab wy, \ab wz, \ab wxyz$ are the vertices in $S$ of a Type III $M\in\M$ for some $y,z\in B\}$. If $M$ is a Type I $Q_4$ with coordinates $i,j,k,x$, then $\{i,j,k,x\}$ induces $K_{1,3}$ in $G$.  If $M$ is a Type III $Q_4$ with vertices $wx,wy,wz,wxyz$ in $S$, then $\{w,x,y,z\}$ induces $K_{1,3}$ in $G$, since $xy,yz$ and $xz$ are not edges in $G$.  That means $\{i,x,y,z\}$ induces $K_{1,3}$ in $G$ since $ix,iy,iz$ are all edges.  It also means that the number of Type II $Q_4$s in $\M$ is at most the number of $K_{1,3}$s in $G$ with one vertex in $A$ and three vertices in $B$, since if $i,x,y,z$ are the coordinates of a Type II M, then $xy,yz,$ and $xz$ are all non-edges.  Thus $\abs{\M}$ is at most the number of $K_{1,3}$s in $G$ which have precisely 3,1, or 0 vertices in $A$, so is certainly at most the maximum number of $K_{1,3}$s in a graph with $n$ vertices.  Hence $\pilout(Z,4)\leq i(K_{1,3})=\frac{1}{2}$, and $\frac{1}{2}\leq \lambda(Z,4)\leq\pil(Z,4)\leq \frac{1}{2}$.
	\end{proof}
\end{theorem}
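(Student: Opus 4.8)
The plan is to follow the template already used for Theorem~\ref{38inQ4}: since $\lambda(Z,4)\le\pil(Z,4)=\max\{\pilin(Z,4),\pilout(Z,4)\}$, it suffices to bound each local density above by $\tfrac12$ and to exhibit a matching construction. For the lower bound I would partition $[n]=A\cup B$ into near-equal halves and take $S$ to be all vertices of even weight in $A$ \emph{and} even weight in $B$. A direct check (splitting on the parities of the fixed coordinates in $A$ and $B$) shows that every sub-$4$-cube with three flip bits in one part and one in the other meets $S$ in exactly four vertices forming an exact copy of $Z$; asymptotically a fraction $\binom{4}{3}(\tfrac12)^4\cdot 2=\tfrac12$ of sub-$4$-cubes have such a $3$--$1$ split, so $\lambda(Z,4)\ge\tfrac12$.

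The organizing observation I would isolate first is that $Z=\{0000,1100,1010,0110\}$ is a $2$-dimensional linear subspace of $(\Integer/2)^4$ all of whose nonzero vectors have weight $2$. Consequently every exact copy of $Z$ in $Q_n$ is a coset of a ``triangle'' subspace $W_{\{i,j,k\}}=\{\emptyset,ij,ik,jk\}$ attached to some $3$-subset $\{i,j,k\}$ of the flip bits. This dictionary converts the combinatorics of good sub-$4$-cubes into a question about which pairs of flip bits carry a weight-$2$ vertex of $S$, which is precisely the data that produces induced $K_{1,3}$'s (or their complements), letting me invoke $i(K_{1,3})=\tfrac12$ from item~\ref{K13} of Section~\ref{sec:inducibility}.

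For $\pilin$ I would fix $\emptyset\in S$ and form a graph $G_S$ on $[n]$ whose edge $xy$ records that $\emptyset,xy,xz,yz$ all lie in $S$ for some good sub-$4$-cube with flip bits $x,y,z,w$. Because the copy is \emph{exact}, the weight-$2$ vertices $wx,wy,wz$ must lie outside $S$, so $\{w,x,y,z\}$ induces $K_3$ together with an isolated vertex, i.e.\ $\overline{K_{1,3}}$. Since a sub-$4$-cube through $\emptyset$ is determined by its flip-bit set, distinct good cubes give distinct induced copies of $\overline{K_{1,3}}$ in $G_S$, whence $\pilin(Z,4)\le i(\overline{K_{1,3}})=i(K_{1,3})=\tfrac12$.

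The main obstacle is the case $\emptyset\notin S$, where a coset avoiding $\emptyset$ can sit relative to the weight-one vertices in $S$ in several genuinely different ways. Writing $A=\{i:i\in S\}$ and $B=[n]\setminus A$, I would classify these cosets by their weight patterns into the type $\{i,j,k,ijk\}$ with pattern $(1,1,1,3)$, the type $\{i,ixy,ixz,iyz\}$ with pattern $(1,3,3,3)$, and the type $\{wx,wy,wz,wxyz\}$ with pattern $(2,2,2,4)$. The crux is a uniform encoding: take the graph $G$ on $A\cup B$ consisting of \emph{all} $A$--$B$ pairs together with those $B$--$B$ pairs forced to carry a weight-$2$ vertex of $S$ by a type-III cube. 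One then verifies that each of the three types yields an induced $K_{1,3}$ in $G$ (with $3$, $1$, and $0$ leaves in $A$ respectively). The delicate point, and where I expect to spend the most care, is confirming that the $B$--$B$ \emph{non}-edges demanded by the second and third types really hold: a $B$--$B$ edge would force a weight-$2$ vertex into $S$, contradicting the exactness of the corresponding copy. Granting this, every good sub-$4$-cube through $\emptyset$ injects into the induced $K_{1,3}$'s of $G$, so $\pilout(Z,4)\le i(K_{1,3})=\tfrac12$. Combining the two cases with the construction gives $\tfrac12\le\lambda(Z,4)\le\pil(Z,4)\le\tfrac12$.
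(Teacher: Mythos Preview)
Your proposal is correct and follows essentially the same approach as the paper's own proof: the same partition-modular construction for the lower bound, the same auxiliary graph $G_S$ on $[n]$ in the $\emptyset\in S$ case reducing to $i(\overline{K_{1,3}})$, and the same three-type classification with the graph $G$ carrying all $A$--$B$ edges plus the Type~III $B$--$B$ edges in the $\emptyset\notin S$ case. One tiny slip: in your final parenthetical you say the three types yield $K_{1,3}$'s ``with $3$, $1$, and $0$ leaves in $A$'' --- it should be $3$, $1$, and $0$ \emph{vertices} in $A$ (for Type~II the single $A$-vertex $i$ is the center, not a leaf), but this does not affect the injection or the bound.
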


We remark that since the only optimizing host graph which maximizes the number of induced $K_{1,3}$ subgraphs is complete bipartite, the graph $G$ defined above can only be optimal if either there are no Type III $M\in\M$ (so both $A$ and $B$ are independent sets), or $A=\emptyset$, each $M\in \M$ is Type III, and $B$ induces a complete bipartite graph with parts asymptotically, but not exactly equal in size (Remark \ref{K13} in Section \ref{sec:inducibility}).


\section{Open Problems and conjectures} 
\label{sec:open_problems_and_conjectures}

In this section, we will restate our conjectures stated earlier in the paper, state some other ones, and discuss some other problems for possible further research.

\subsection{Conjectures on $d$-cube density}
Baber's flag algebra upper bounds are within $10^{-9}$ of our lower bounds for five of the configurations in $Q_3$, so we conjecture equality holds for all of them.

\begin{conj}
	The lower bounds for $W_7,W_8,W_9,W_{10}$, and $W_{12}$ given in Table \ref{bestQ3s} are, in fact, the exact 3-cube densities for these configurations.
\end{conj}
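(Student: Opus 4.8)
The lower bounds are already established by the partition-modular and layered constructions recorded in Table~\ref{bestQ3s}, so the task is to produce matching upper bounds: $\frac{1}{3}$ for $W_7$, $\frac{2}{3}$ for $W_8$, $\frac{4}{9}$ for $W_9$, $\frac{5}{12}$ for $W_{10}$, and $\frac{1}{2}$ for $W_{12}$. The plan is to split the five configurations into the two non-layered cases $W_9$ and $W_{10}$, where the local-density method $\lambda(H,3)\leq\pil(H,3)$ still has a chance of succeeding, and the three layered cases $W_7$, $W_8$, $W_{12}$, where Theorem~\ref{thm:local_trivial_layered} forces $\pil=1$ and a fundamentally different, global argument is required.

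For $W_9$ (and, optimistically, $W_{10}$) I would try to reproduce the machinery of Theorems~\ref{W2}, \ref{38inQ4}, and~\ref{thm:halfdensity}. Fix a vertex $v$, treating the cases $v\in S$ and $v\notin S$ separately, and enumerate the possible ``types'' of good sub-$3$-cubes through $v$ according to how the copy of the configuration sits relative to $v$. Each type imposes a list of forced non-membership constraints on the neighbors and second-neighbors of $v$, which translate into forbidden induced subgraphs in an auxiliary graph $G_S$ on $[n]$; counting good sub-cubes of each type then becomes a problem of counting induced copies of a fixed small graph, to be bounded by its inducibility (Section~\ref{sec:inducibility}) together with an edge-count optimization of the kind carried out in the Case~1 / Case~2 analyses for $W_2$ and $Y$. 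The goal is to show these contributions sum to exactly $\frac{4}{9}$ for $W_9$ and $\frac{5}{12}$ for $W_{10}$.

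The main obstacle in the non-layered cases is twofold. First, the controlling small graph for $W_{10}$ may turn out to be one whose exact inducibility is not known -- for example $P_4$, which is only bounded in \cite{EL:2015,Exoo:1986,V:2013} -- in which case this route is blocked and one must instead mine Baber's \cite{Baber:2014p} flag-algebra computation for an exact, human-verifiable dual certificate, rounding the numerical optimum to exact rationals and checking the resulting sum-of-squares inequality by hand. Second, even when the relevant inducibility is known, the bookkeeping over several types and the optimization over the edge count $e$ of $G_S$ must be shown to collapse to the conjectured value, analogous to the identification of \eqref{eqdoublestar} with a complete-tripartite count; the appearance of the unusual value $\frac{5}{12}$ suggests a genuine interaction of several type-contributions rather than a single clean inducibility.

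For the layered trio $W_7$, $W_8$, $W_{12}$, the local-density approach is useless: by Theorem~\ref{thm:local_trivial_layered} every vertex lies in a sub-$3$-cube whose induced configuration is layered, so $\pil(W_7,3)=\pil(W_8,3)=\pil(W_{12},3)=1$, and this is where the real difficulty lies. I would pursue one of two global strategies. The first is to define a \emph{second-order} local density relative to a fixed sub-$Q_1$ or sub-$Q_2$ face rather than a single vertex: Lemma~\ref{lem:monochromatic_Q_d} forces simultaneous layering at one vertex, but it does \emph{not} force compatible layering directions along a fixed flip-bit pair, so averaging the count of good sub-cubes over such a face may yield a quantity strictly below $1$ that still upper-bounds $\lambda$. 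The second, and the more likely route to exact equality, is to convert Baber's flag-algebra certificate into an exact combinatorial proof; the strong symmetry of the extremal constructions ($0\bmod 3$ for $W_7,W_8$ and $0,1\bmod 4$ for $W_{12}$) suggests that the dual certificate may be expressible as a small, structured sum of squares over the flags of $Q_3$. Establishing any one exact upper bound for a nontrivial layered configuration would be the decisive new ingredient, and I expect it to be by far the hardest part of the whole program.
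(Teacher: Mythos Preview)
The statement you are addressing is a \emph{conjecture}, not a theorem: it appears in Section~\ref{sec:open_problems_and_conjectures} precisely because the paper does \emph{not} prove it. The only evidence the paper offers is that Baber's numerical flag-algebra upper bounds lie within $10^{-9}$ of the constructive lower bounds for these five configurations. There is therefore no ``paper's own proof'' to compare your proposal against, and what you have written is (as your own hedged language makes clear) a research programme rather than a proof.

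Your assessment of where the difficulties lie is accurate and consistent with the paper's discussion. In particular, you correctly invoke Theorem~\ref{thm:local_trivial_layered} to explain why the local-density method $\lambda\leq\pil$ is powerless for $W_7$, $W_8$, and $W_{12}$; the paper says exactly this (``our usual procedure of using $\pil(H,d)$ to get an upper bound for $d$-cube density cannot work and that is why we have not been able to obtain upper bounds by hand for any nontrivial layered configuration''). For $W_9$ and $W_{10}$ your plan to mimic the auxiliary-graph arguments of Theorems~\ref{W2}, \ref{38inQ4}, and \ref{thm:halfdensity} is reasonable, but note that the paper did not succeed in carrying this out either, so there is no guarantee the type decomposition collapses to a single known inducibility. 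Your fallback of extracting an exact rational dual certificate from Baber's SDP is the natural next step, but until such a certificate is exhibited and verified, none of the five upper bounds is established and the conjecture remains open.
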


It follows from Theorem \ref{thm:local_trivial_layered} that if a configuration $H$ in $Q_d$ is not layered, then $\lambda(H,d)<1$.  It is easy to show that the only layered configurations with $d$-cube density equal to 1 are the trivial ones and that for each $d\geq 3$, there is a layered configuration in $Q_d$ with $d$-cube density equal to $\frac{2}{3}$.  We have an example of a configuration with 5-cube density equal to at least the inducibility of $K_{2,3}$, which is $\frac{5}{8}$ (configuration $E(5,2)$ in Section \ref{subsec:another_infinite_family}).

\begin{conj}
	If $H$ is a configuration in $Q_d$ such that $\frac{5}{8}<\lambda(H,d)<1$ then either
	\begin{enumerate}
		\item $H$ is layered and $\lambda(H,d)=\frac{2}{3}$ or
		\item $d=3$, $H=W_2$, and $\lambda(H,d)=\frac{3}{4}$.
	\end{enumerate}
\end{conj}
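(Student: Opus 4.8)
The plan is to separate the layered and non-layered cases and to use $\lambda(H,d)\le\pil(H,d)$ together with the inducibility dictionary of Section~\ref{sec:inducibility} throughout. The target spectrum near the top is governed by four values — $1$ (trivial/layered), $\tfrac34=i(K_{1,2})$, $\tfrac23$ (the best nontrivial layered density), and $\tfrac58=i(K_{2,3})$ — and the conjecture asserts that nothing lands strictly between $\tfrac58$ and $1$ except $\tfrac23$ and the single exceptional value $\tfrac34$. The infinite family of Theorem~\ref{regularfamily}, whose densities all sit below $\tfrac58$, is consistent evidence that the generic partition-modular constructions never enter the critical interval.

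First I would dispose of the layered configurations. By Theorem~\ref{thm:local_trivial_layered} a configuration has $\pil(H,d)=1$ exactly when it is layered, and the trivial layered ones (already excluded by $\lambda(H,d)<1$) have density $1$. For the nontrivial layered configurations I would classify the canonical weight sets $W_H\subseteq[0,d]$ and evaluate $\lambda(H,d)$ directly via the mod-$a$ layered constructions of Section~\ref{sec:layered_configurations}; the mod-$3$ construction already yields the lower bound $\tfrac23$ for the relevant weight patterns, so the content is the matching upper bound. This is precisely where the local-density machinery fails ($\pil=1$), and one must instead run a global averaging or convexity argument — essentially a by-hand version of Baber's flag-algebra bounds — to cap every nontrivial layered density at $\tfrac23$.

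Second, for non-layered $H$ one has $\pil(H,d)<1$, and I would follow the template of Theorems~\ref{W2},~\ref{38inQ4}, and~\ref{thm:halfdensity}: fix a vertex $v$ (inside or outside $S$), partition the good $d$-cubes through $v$ into finitely many \emph{types} according to how the surviving vertices of $H$ sit relative to $v$, and encode each type as a count of induced copies of a small pattern graph on the flip-bit set $[n]$. In each worked case this collapses, after a complete-tripartite optimization, to a single inducibility value: $i(K_{1,2})=\tfrac34$ for $W_2$, $i(2K_2)=i(K_{2,2})=\tfrac38$ for $Y$, and $i(K_{1,3})=\tfrac12$ for $Z$. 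The goal is to show that $W_2$ is the \emph{only} non-layered configuration whose pattern graph has inducibility exceeding $\tfrac58$; every other realizable pattern graph should reduce to one of $K_{2,2}$, $2K_2$, $K_{1,3}$, $K_{1,1,2}$, $K_{\rm{PAW}}$, or $K_{2,3}$, all of whose inducibilities are at most $\tfrac58$.

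The main obstacle is exactly the uniformity over all $d$. For fixed small $d$ one can hope to enumerate the realizable pattern graphs and read off their inducibilities from Section~\ref{sec:inducibility}, but in growing dimension the pattern graphs become large, and the argument would require proving that no infinite family of pattern graphs carries an inducibility back into the open interval $(\tfrac58,1)$. This is compounded by the fact that several of the inducibilities one would need are themselves unknown (most notably $i(P_4)$), so even some base cases are not fully in hand. Controlling the inducibility spectrum of the pattern graphs uniformly in $d$ — and in particular ruling out a high-dimensional configuration whose local density creeps above $\tfrac58$ without being layered or an exact copy of $W_2$ — is the crux that keeps this a conjecture rather than a theorem.
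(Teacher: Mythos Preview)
This statement is labeled a \emph{conjecture} in the paper, and the paper offers no proof of it; there is nothing to compare your proposal against. The authors explicitly note (immediately after Theorem~\ref{thm:local_trivial_layered}) that they ``have not been able to obtain upper bounds by hand for any nontrivial layered configuration in $Q_d$ for any $d$,'' so even the layered half of your outline is asking for something the paper does not claim to do.

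Your write-up is best read not as a proof but as a strategic sketch, and on that level it is sensible and well aligned with the paper's methods: you correctly separate layered from non-layered, you recognize that $\pil=1$ kills the local-density argument in the layered case, and you correctly describe the pattern-graph/inducibility mechanism used in Theorems~\ref{W2}, \ref{38inQ4}, and \ref{thm:halfdensity}. You also correctly identify, in your final paragraph, exactly why this remains open: one would need a uniform-in-$d$ control on the inducibilities of all the pattern graphs that can arise, and even some small base cases (e.g.\ $i(P_4)$) are not known. That is the genuine gap, and you have named it yourself. One minor correction: your phrase ``cap every nontrivial layered density at $\tfrac23$'' overstates what is needed --- many nontrivial layered configurations have density well below $\tfrac58$ (e.g.\ $W_7$ at $\tfrac13$, $W_{12}$ at $\tfrac12$); the conjecture only asserts that none land in $(\tfrac58,1)\setminus\{\tfrac23\}$.
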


\subsection{Another Inifinite Family}
\label{subsec:another_infinite_family}

Let $d$ and $i$ be positive integers with $1\leq i<d$.  We define the configuration $E(d,i)$ in $Q_d$ by
\[
	E(d,i)=\left\{ (x_1,x_2,\ldots,x_d)\in V_d {\bigg\vert} \sum_{j=1}^i x_j \textrm{ and }\sum_{j=i+1}^d x_j \textrm{ are both even} \right\}.
\]
So two weights must be even for each vertex in $E(d,i)$, whereas only one must be even for each vertex in the configuration $H(d,i)$ of Section \ref{sec:an_infinite_family}.  Note that $E(d,d-i)$ is an exact copy of $E(d,i)$ for all $i$ and $d$.  We remark that $E(3,1)$ is the configuration $W_2$ in $Q_3$ and $E(4,1)$ and $E(4,2)$ are the configurations $Z$ and $Y$ in $Q_4$ of Section \ref{sec:configurations_in_q_4}.

To get a lower bound for $\lambda(E(d,i),d)$ we let $x$ be a real number in $\left(0,\frac{1}{2}\right]$, $m=\ceil{xn}\in\left[1,\frac{n}{2}\right]$, and we define a configuration $S_x$ in $Q_n$ by $S_x=$ $\{(x_1,x_2,\ldots,x_n)\in V_n : \sum_{j=1}^m x_j$ and $\sum_{j=m+1}^n x_j$ are both even$\}$.  Then any sub-$d$-cube of $Q_n$ with precisely $i$ or $d-i$ flip bits in $[1,m]$ is ``good'', and in the limit as $n$ goes to infinity this is a fraction $f_{d,i}(x)=\binom{d}{i}\left[ x^i(1-x)^{d-i} + x^{d-i}(1-x)^i \right]$ of all sub-$d$-cubes.  The function $f_{d,i}(x)$ is also the fraction of subsets of $d$ vertices of $K_{m,n-m}$ which induce $K_{i,d-i}$.  As shown in \cite{BS:1994}, $i(K_{i,d-i})$ is the maximum value of $f_{d,i}(x)$ where $x\in\left( 0,\frac{1}{2} \right]$, so we have shown the following.

\begin{prop}\label{Eprop}
	For all integers $i$ and $d$ with $1\leq i<d$, $\ex(E(d,i),d)\geq i(K_{i,d-i})$.
\end{prop}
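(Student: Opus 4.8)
The plan is to produce, for each real parameter $x\in\left(0,\frac12\right]$, an explicit partition-modular configuration $S_x$ in $Q_n$ whose limiting density of good sub-$d$-cubes equals $f_{d,i}(x)=\binom{d}{i}\left[x^i(1-x)^{d-i}+x^{d-i}(1-x)^i\right]$, and then to recognize this quantity as an induced complete-bipartite density so that optimizing over $x$ reduces to the Brown--Sidorenko inducibility theorem. Concretely, I would fix $x$, set $m=\ceil{xn}$, and take $S_x$ to be the set of $v=(x_1,\dots,x_n)\in V_n$ for which $\sum_{j=1}^m x_j$ and $\sum_{j=m+1}^n x_j$ are both even; this is the partition-modular construction for $[n]=[1,m]\cup[m+1,n]$.

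First I would verify that every sub-$d$-cube $R$ having exactly $i$ (or exactly $d-i$) of its flip bits in $[1,m]$ is good. Parametrizing $R$ by $(y_1,\dots,y_d)\in V_d$ so that the first $i$ coordinates are the flip bits lying in $[1,m]$, a vertex of $R$ lies in $S_x$ iff $\sum_{j=1}^i y_j\equiv c_1$ and $\sum_{j=i+1}^d y_j\equiv c_2\pmod 2$, where $c_1,c_2$ are the parities of the fixed coordinates in the two parts. Complementing coordinate $1$ when $c_1$ is odd and coordinate $i+1$ when $c_2$ is odd is an automorphism of $Q_d$ carrying this set onto $E(d,i)$, so $S_x\cap R$ is an exact copy of $E(d,i)$; I only need this implication, not its converse, since I am after a lower bound. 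Next I would count: the number of sub-$d$-cubes with exactly $k$ flip bits in $[1,m]$ is $\binom{m}{k}\binom{n-m}{d-k}2^{n-d}$, so the fraction of good sub-$d$-cubes is at least $\left[\binom{m}{i}\binom{n-m}{d-i}+\binom{m}{d-i}\binom{n-m}{i}\right]\big/\binom{n}{d}$, which tends to $f_{d,i}(x)$ as $n\to\infty$ with $m/n\to x$. Hence $\ex(E(d,i),d)\geq f_{d,i}(x)$ for every admissible $x$, and therefore $\ex(E(d,i),d)\geq\sup_{x}f_{d,i}(x)$.

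Finally I would identify the supremum. Choosing $d$ vertices uniformly from $K_{m,n-m}$, the induced subgraph is $K_{i,d-i}$ exactly when $i$ of them come from one side and $d-i$ from the other, an event of limiting probability $f_{d,i}(x)$; thus $f_{d,i}(x)$ is precisely the induced-$K_{i,d-i}$ density of the complete bipartite graph with part ratio $x$. By the Brown--Sidorenko theorem \cite{BS:1994}, the extremal host for $i(K_{i,d-i})$ is complete bipartite and its optimal part ratio is the maximizer of the one-parameter family $f_{d,i}(x)$ on $\left(0,\frac12\right]$, so $\sup_x f_{d,i}(x)=i(K_{i,d-i})$ and the bound follows.

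I expect the only genuinely delicate points to be the parity bookkeeping in the second step — confirming that the fixed-coordinate parities $c_1,c_2$ never obstruct the exact-copy property because the required parity shift is always realized by a coordinate-complementing automorphism of $Q_d$ — and the appeal in the last step to the precise form of Brown--Sidorenko, which must deliver both that complete bipartite hosts are extremal for $i(K_{i,d-i})$ and that their optimum is captured exactly by $\sup_x f_{d,i}(x)$ rather than by some unbalanced or non-bipartite configuration.
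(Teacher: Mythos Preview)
Your proposal is correct and is essentially identical to the paper's own argument: the paper also fixes $x\in(0,\tfrac12]$, takes $m=\lceil xn\rceil$, defines $S_x$ by the same double-parity condition, observes that sub-$d$-cubes with exactly $i$ or $d-i$ flip bits in $[1,m]$ are good, computes the limiting fraction as $f_{d,i}(x)$, and then invokes Brown--Sidorenko to identify $\sup_x f_{d,i}(x)$ with $i(K_{i,d-i})$. Your write-up simply fills in the parity-automorphism and counting details that the paper leaves to the reader.
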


In \cite{BS:1994}, Brown and Sidorenko determined for which complete bipartite graphs $K_{s,s+t}$ the optimizing host complete bipartite graph on $n$ vertices, as $n$ goes to infinity, can be taken to be asymptotically equibipartite.  It follows from their work that for $i\in\left[ 1,\frac{d}{2} \right]$, $f_{d,i}(x)$ is maximized when $x=\frac{1}{2}$ if and only if $i\geq \frac{d-\sqrt{d}}{2}$.  So $\lambda(E(4,1),4)\geq f_{4,1}\left(\frac{1}{2}\right) = i(K_{1,3}) = \frac{1}{2}$ and $\ex(E(4,2),4) \geq f_{4,2}\left(\frac{1}{2}\right) = i(K_{2,2}) = \frac{3}{8}$ (and Theorems \ref{38inQ4} and \ref{thm:halfdensity} say that equality holds).

However, $f_{5,1}(x)$ is maximized when $x=\frac{3-\sqrt{3}}{6}$, so $\ex(E(5,1),5) \geq f_{5,1}\left( \frac{3-\sqrt{3}}{6} \right) = i(K_{1,4}) = \frac{5}{12}$.  We believe equality holds.

\begin{conj}
	Equality holds in Proposition \ref{Eprop}.
\end{conj}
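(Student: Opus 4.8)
The plan is to prove the matching upper bound $\ex(E(d,i),d)\le i(K_{i,d-i})$, since Proposition~\ref{Eprop} already gives $\ex(E(d,i),d)\ge i(K_{i,d-i})$ (using that $\max_x f_{d,i}(x)=i(K_{i,d-i})$ by Brown--Sidorenko). Because $\ex(H,d)\le\pil(H,d)=\max\{\pilin(H,d),\pilout(H,d)\}$, it suffices to show $\pilin(E(d,i),d)\le i(K_{i,d-i})$ and $\pilout(E(d,i),d)\le i(K_{i,d-i})$, generalizing Theorems~\ref{38inQ4} and~\ref{thm:halfdensity}. The starting point is a clean description of the exact copies of $E(d,i)$: since permuting the coordinates within a block and flipping an even number of them preserves an even-weight code, while flipping an odd number toggles its target parity, every exact copy $K$ of $E(d,i)$ has the form $K=\setst{v}{\wt(v|_{B_1})\equiv\epsilon_1,\ \wt(v|_{B_2})\equiv\epsilon_2\ (\mathrm{mod}\ 2)}$ for a partition $[d]=B_1\sqcup B_2$ with $\abs{B_1}=i,\ \abs{B_2}=d-i$ and parities $(\epsilon_1,\epsilon_2)\in\{0,1\}^2$; and $\emptyset\in K$ exactly when $(\epsilon_1,\epsilon_2)=(0,0)$.

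For the case $v=\emptyset\in S$ I would associate to $S$ the distance-$2$ graph $G_S$ on $[n]$, joining $p,q$ exactly when the weight-$2$ vertex $\{p,q\}$ lies in $S$. If a sub-$d$-cube $R$ through $\emptyset$ with flip-bit set $T$ is good, then $S\cap R$ is an exact copy with $(\epsilon_1,\epsilon_2)=(0,0)$; a weight-$2$ vertex $\{p,q\}\subseteq T$ then lies in $S\cap R$ precisely when $p,q$ lie in the same block, so $G_S[T]$ is the disjoint union $K_i\sqcup K_{d-i}=\overline{K_{i,d-i}}$. Distinct good cubes give distinct $T$, so $\Gin(E(d,i),d,n,S)$ is at most the number of induced copies of $\overline{K_{i,d-i}}$ in $G_S$, whence $\pilin(E(d,i),d)\le i(\overline{K_{i,d-i}})=i(K_{i,d-i})$, using $i(H)=i(\overline H)$.

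For $v=\emptyset\notin S$ the exact copy has $(\epsilon_1,\epsilon_2)\ne(0,0)$, so there are three types to consider: $(1,0)$, $(0,1)$, and $(1,1)$ (with $(1,0)\sim(0,1)$ when $i=d-i$). Set $A=\setst{p\in[n]}{\{p\}\in S}$ and $B=[n]\setminus A$, and let $G$ be the graph on $[n]$ whose edges are all pairs between $A$ and $B$ together with those pairs inside $B$ that are weight-$2$ vertices of $S$, with $A$ independent. The key computation is that a good cube of each type induces exactly $K_{i,d-i}$ in $G$: for type $(1,0)$ the odd block $B_1$ lies in $A$ and the even block $B_2$ in $B$ with no weight-$2$ vertex of $S$ inside it; for $(0,1)$ symmetrically; and for $(1,1)$ all $d$ flip bits lie in $B$, with the cross-block pairs being exactly the weight-$2$ vertices of $S$ and the within-block pairs not. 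In every case the induced graph is complete bipartite with parts of sizes $i$ and $d-i$. Hence $\Gout(E(d,i),d,n,S)$ is at most the number of induced $K_{i,d-i}$ in $G$, giving $\pilout(E(d,i),d)\le i(K_{i,d-i})$ and the conjectured equality.

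I expect the main obstacle to be the bookkeeping in the $v\notin S$ case: one must verify that a single auxiliary graph $G$ simultaneously captures all three parity types, that every good cube --- not merely the low-weight skeleton drawn in Figures~\ref{T12Q4} and~\ref{fig:halfdensitytypes} --- really forces the claimed pattern of weight-$1$ and weight-$2$ vertices, so that \emph{no} good cube escapes being counted as a $K_{i,d-i}$, and that forcing $A$ independent is consistent with every good cube (it is, because any two flip bits lying in $A$ sit in a common even block, so their weight-$2$ vertex is absent from $S$). The remaining ingredient is purely about inducibility: the value $i(K_{i,d-i})$ equals $\max_x f_{d,i}(x)$, so the upper bound meets the lower bound of Proposition~\ref{Eprop} and equality follows. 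As a sanity check, specializing to $d=4$ recovers $\overline{K_{2,2}}=2K_2$ and $\overline{K_{1,3}}=K_3\sqcup K_1$ in the $\emptyset\in S$ cases, and the Type I/II/III analyses in the $\emptyset\notin S$ cases, of Theorems~\ref{38inQ4} and~\ref{thm:halfdensity}.
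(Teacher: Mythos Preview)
This statement is a \emph{conjecture} in the paper; the authors do not supply a proof. They verify it only for the three smallest cases $E(3,1)=W_2$, $E(4,2)=Y$, and $E(4,1)=Z$ (Theorems~\ref{W2}, \ref{38inQ4}, \ref{thm:halfdensity}), each by a separate and in two cases rather intricate argument.

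Your proposal is not merely a reformulation of those proofs: it appears to be a correct proof of the full conjecture, and it is cleaner than the paper's case analyses even where they overlap. The characterization of the exact copies of $E(d,i)$ as the $4\binom{d}{i}$ ``block--parity'' cosets is correct (every hypercube automorphism is a flip composed with a coordinate permutation, and you have tracked both correctly). The $\emptyset\in S$ case is exactly the natural generalization of the first paragraphs of Theorems~\ref{W2}, \ref{38inQ4}, \ref{thm:halfdensity}. The real contribution is the $\emptyset\notin S$ case: your single auxiliary graph $G$ on $[n]$ --- complete between $A$ and $B$, independent on $A$, and with $B$--$B$ edges given by the weight-$2$ vertices in $S$ --- simultaneously forces $G[T]\cong K_{i,d-i}$ for all three parity types. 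I checked all three types and the claim holds; in particular, the paper's laborious edge-count case analysis via Lemmas~\ref{T2W2} and~\ref{T2Q4} in Theorems~\ref{W2} and~\ref{38inQ4} is entirely bypassed. (The paper essentially found your graph $G$ in the proof of Theorem~\ref{thm:halfdensity} but did not realize it also handles the $i=d-i$ cases.)

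Two small remarks. First, in your parenthetical about $A$ being independent you write that two $A$-flip-bits ``sit in a common even block''; you mean the block with \emph{odd} target parity --- their weight-$2$ vertex then has even weight there, hence lies outside $S$, which is the point. Second, the statement should be read for $d\ge 3$: when $d=2$, $i=1$ one has $E(2,1)=\{00\}$, $K_{1,1}=K_2$ with $i(K_2)=1$, and both Proposition~\ref{Eprop} and the conjecture break down (Brown--Sidorenko explicitly excludes $K_{1,1}$). Your argument still gives the (trivial) bound $\pil\le 1$ there, so nothing you wrote is wrong; the restriction is on the paper's side.
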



\subsection{One Vertex in $Q_d$}
\label{subsec:one_vertex_in_q_d}

Let $U_d$ be the configuration in $Q_d$ consisting of a single vertex.  We have been unable to determine $\lambda(U_d,d)$ for any $d\geq 2$.  Since $U_d$ is a layered configuration (with weight set $\{0\}$), it makes sense to use a layered configuration $S$ in $Q_n$ to get a construction for a lower bound.  Letting $S$ be the layered configuration 0 mod $(d+1)$ in $Q_n$ shows that $\lambda(U_d,d)\geq\frac{2}{d+1}$.  This is the best lower bound we have for $\lambda(U_2,2)$ ($Z_2$ in Table \ref{bestQ2s}) and $\lambda(U_3,3)$ ($W_3$ in Table \ref{bestQ3s}).  The flag algebra upper bounds are somewhat larger: $\frac{2}{3}\leq \ex(U_2,2)\leq .685714$ and $\frac{1}{2}\leq \ex(U_3,3)\leq .610043$.

A Hamming perfect code can be used to construct a non-layered configuration $S$ in $Q_n$ which produces a lower bound for $\ex(U_3,3)$ which is almost as good.  Let $H$ be the configuration in $Q_7$ consisting of the 16 vectors in the length 7 dimension 4 Hamming perfect code. (One relization of it is $\emptyset$ (the 0 vector), the 7 incidence vectors of a Fano plane, say the 7 cyclic permutations of 124, and the complements of these 8 vectors).  The minimum distance in this code is 3, and each vector is Hamming distance 3 from 7 other vectors.  So the number of sub-3-cubes which contain two vertices of $H$ is $\frac{1}{2}\cdot 16\cdot 7=56$.  Since the average number of vertices of $H$ in a sub-3-cube is $\frac{2^4}{2^7}\cdot 8 =1$, there must be 56 sub-3-cubes with no vertices of $H$.  Hence there are $\binom{7}{3}\cdot 2^4-112=448$ sub-3-cubes which have an exact copy of $U_3$, a fraction $\frac{448}{560}=\frac{4}{5}$ of the sub-3-cubes of $Q_7$.  If $n$ is large and $S$ is a blow up of $H$ in $Q_n$ (equipartition of $[n]$ into 7 parts) then a sub-3-cube of $Q_n$ with flip bits in different parts will have an exact copy of a sub-3-cube of the $Q_7$ (with configuration $H$), and $\frac{4}{5}$ of these will have an exact copy of $U_3$.  Hence $\lambda(U_3,3)\geq \frac{7}{7}\cdot\frac{6}{7}\cdot\frac{5}{7}\cdot\frac{4}{5}=\frac{24}{49}$, not quite as good as the lower bound $\frac{1}{2}$ using the layered configuration 0 mod 4 in $Q_n$.

Another way to get a lower bound for $\ex(U_d,d)$ is to let $R$ be the configuration in $Q_n$ obtained by the following random process: for each $v\in V_n$, put $v\in R$ with (uniform independent) probability $\frac{1}{2^d}$.  The probability that a sub-$d$-cube has an exact copy of $U_d$ is $2^d\cdot\frac{1}{2^d}\cdot\left(\frac{2^d-1}{2^d}\right)^{2^d-1}$ so $\lambda(U_d,d)\geq\left( \frac{2^d-1}{2^d} \right)^{2^d-1}$.  Of course this is larger than $\frac{1}{e}$ and, as noted in Section \ref{sec:an_infinite_family}, is equal to $\frac{1}{e}$ in the limit as $d$ goes to infinity.  This lower bound is larger than $\frac{2}{d+1}$ if $d>4$.

\begin{conj}
	If $d\geq 5$ then $\lambda(U_d,d)=\left( \frac{2^d-1}{2^d} \right)^{2^d-1}$.
\end{conj}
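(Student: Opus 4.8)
The bound $\ex(U_d,d)\ge\left(\frac{2^d-1}{2^d}\right)^{2^d-1}$ is already supplied by the random construction above, so the entire task is the matching upper bound, i.e. to show $\limsup_{n\to\infty}\ex(U_d,d,n)\le\left(\frac{2^d-1}{2^d}\right)^{2^d-1}$. The first point to make is that the standard engine of this paper, $\ex(H,d)\le\pil(H,d)$, is worthless for a single vertex: taking $S=\{v\}$ makes \emph{every} sub-$d$-cube through $v$ meet $S$ in exactly $v$, so $\gin(U_d,d,n,S)=1$ at $v$ and hence $\pilin(U_d,d)=\pil(U_d,d)=1$. No local bound can do better than $1$, so a genuinely global argument is unavoidable; this is presumably why the statement is still only a conjecture.

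I would begin from the exact global identity obtained by charging each ``good'' cube to its unique vertex of $S$. Writing $\alpha=|S|/2^n$ and letting $p_v$ be the fraction of sub-$d$-cubes through $v$ that meet $S$ only in $v$, one gets
\[
	g(U_d,d,n,S)=2^d\alpha\cdot\frac{1}{|S|}\sum_{v\in S}p_v .
\]
For the random set $\alpha=2^{-d}$ and every $p_v$ equals $\left(\frac{2^d-1}{2^d}\right)^{2^d-1}$, which reproduces the conjectured value, so the goal is precisely to show that the product of $2^d\alpha$ with the average isolation probability cannot beat this. The plan is to bound each $p_v$ by the ``independent'' quantity obtained from treating the $2^d-1$ non-$v$ vertices of a random cube as absent from $S$ with independent probabilities, that is, to prove a correlation inequality (of Shearer/entropy or FKG type) certifying that, after averaging over cubes, the product-measure case is extremal, while tracking how the density factor $2^d\alpha$ is forced upward by the same neighbourhood statistics of $S$ that push the isolation probabilities down.

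The principal obstacle is that one must control all Hamming-distance scales simultaneously. Using only the $d$ neighbours of $v$ gives $p_v\le(1-\beta_v)^d+o(1)$, where $\beta_v$ is the fraction of neighbours of $v$ in $S$; this is far too weak, since it discards the $2^d-1-d$ vertices of the cube at distance $\ge 2$ from $v$, and it is exactly those that supply the correct exponent $2^d-1$ in the answer. For structured $S$ the events ``$w\in S$'' over the vertices $w$ of a random cube are genuinely correlated, so establishing extremality of the product measure is the heart of the matter. The deepest difficulty, and the reason I expect this to be hard, is the phase transition at $d=5$: for $d\le 4$ the random set is \emph{not} optimal (the layered construction, with density $\tfrac{2}{d+1}$, wins), so any correct proof must be quantitatively sharp enough to reverse exactly at $d=5$. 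In particular it must certify that the best layered and partition-modular competitors fall strictly below $\left(\frac{2^d-1}{2^d}\right)^{2^d-1}$ precisely when $d\ge 5$, while ruling out every other configuration. This excludes any soft, dimension-insensitive estimate and suggests that either a convexity/entropy bound finely tuned to the exponent $2^d-1$, or a finite flag-algebra-style certificate in some $Q_N$, will be required, with verification of the threshold being the crux.
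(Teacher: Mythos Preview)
The statement is a \emph{conjecture} in the paper, not a theorem; the paper offers no proof and does not claim one. So there is nothing to compare your proposal against: the paper's ``proof'' consists solely of the random lower-bound construction and the remark that the question has the same flavour as the edge-statistics conjecture of Alon et al.

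Your write-up is not a proof either, and you are candid about this. What you have is a correct diagnosis of why the problem is hard, together with the beginnings of a strategy. Your observation that $\pil(U_d,d)=1$ is right and is in fact a special case of the paper's Theorem~\ref{thm:local_trivial_layered} (since $U_d$ is layered), so the local-density machinery is indeed useless here. Your charging identity $g(U_d,d,n,S)=2^d\alpha\cdot\frac{1}{|S|}\sum_{v\in S}p_v$ is correct and is a reasonable starting point. But the ``plan'' that follows --- a correlation/entropy inequality showing the product measure is extremal --- is not carried out, and you yourself flag the central obstruction: any valid argument must be sharp enough to fail for $d\le 4$ (where layered constructions beat the random one) and succeed for $d\ge 5$. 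Nothing in your outline explains how that threshold would emerge from an FKG- or Shearer-type bound, and indeed generic correlation inequalities of that sort are dimension-insensitive.

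In short: your assessment of the landscape is accurate and matches the paper's, but there is no proof here, only an honest acknowledgement that the upper bound remains open.
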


This has the same flavor as a special case of the edge-statistics conjecture of Alon et al. \cite{Alon:2020} which says (though formulated differently) that the limit as $k$ goes to infinity of the inducibility of a graph with $k$ vertices and one edge is $\frac{1}{e}$.


\bibliographystyle{amsplain}
\bibliography{combinedbib}

\providecommand{\bysame}{\leavevmode\hbox to3em{\hrulefill}\thinspace}
\providecommand{\MR}{\relax\ifhmode\unskip\space\fi MR }
\providecommand{\MRhref}[2]{%
  \href{http://www.ams.org/mathscinet-getitem?mr=#1}{#2}
}
\providecommand{\href}[2]{#2}
\begin{thebibliography}{10}

\bibitem{Alon:2020}
N.~Alon, D.~Hefetz, M.~Krivelevich, and M.~Tyomkyn, \emph{Edge-statistics on
  large graphs}, Combinatorics, Probability and Computing \textbf{29} (2020),
  no.~2, 163--189.

\bibitem{AKS:2006}
N.~Alon, A.~Krech, and T.~Szab\'{o}, \emph{Tur\'{a}n's theorem in the
  hypercube}, SIAM Journal on Discrete Mathematics \textbf{21} (2007), 66--72.

\bibitem{Alon:2006}
N.~Alon, R.~Radoi\v{c}i\'{c}, B.~Sudakov, and J.~Vondr\'{a}k, \emph{A
  {R}amsey-type result for the hypercube}, Journal of Graph Theory \textbf{23}
  (2006), 196--208.

\bibitem{Axenovich:2006}
M.~Axenovich and R.~Martin, \emph{A note on short cycles in a hupercube},
  Discrete Mathematics \textbf{306} (2006), 2212--2218.

\bibitem{Baber:2014p}
R.~Baber, private communication, 2014.

\bibitem{Bollobas:1986bfa}
B.~Bollobas, C.~Nara, and S.~Tachibana, \emph{The maximal number of induced
  complete bipartite graphs}, Discrete Mathematics \textbf{62} (1986), no.~3,
  271--275.

\bibitem{BS:1994}
J.~I. Brown and A.~Sidorenko, \emph{The inducibility of complete bipartite
  graphs}, Journal of Graph Theory \textbf{18} (1994), no.~6, 629--645.

\bibitem{Chung:1992}
F.~R.~K. Chung, \emph{Subgraphs of the hypercube containing no small even
  cycles}, Journal of Graph Theory \textbf{16} (1992), 273--286.

\bibitem{Conder:1993}
M.~Conder, \emph{Hexagon-free subgraphs of hypercubes}, Journal of Graph Theory
  \textbf{17} (1993), 477--479.

\bibitem{Conlon:2010}
D.~Conlon, \emph{An extremal theorem in the hypercube}, Electronic Journal of
  Combinatorics \textbf{17} (2010), R111.

\bibitem{EL:2015}
C.~Even-Zohar and N.~Linial, \emph{A note on the inducibility of 4-vertex
  graphs}, Graphs and Combinatorics \textbf{31} (2015), 1367--1380.

\bibitem{Exoo:1986}
G.~Exoo, \emph{Dense packings of induced subgraphs}, Ars Combinatoria
  \textbf{22} (1986), 5--10.

\bibitem{Furedi:2015}
Z.~F\"{u}redi and L.~Ozkahya, \emph{On 14-cycle-free subgraphs of the
  hypercube}, Combinatorics, Probability, and Computing \textbf{18} (2009),
  725--729.

\bibitem{perfectpathpaper}
J.~Goldwasser and R.~Hansen, \emph{Maximum density of vertex-induced perfect
  cycles and paths in the hypercube}, Discrete Mathematics \textbf{344} (2021).

\bibitem{group_paper}
J.~Goldwasser, B.~Lidicky, R.~Martin, D.~Offner, J.~Talbot, and M.~Young,
  \emph{Polychromatic colorings on the hypercube}, Journal of Combinatorics
  \textbf{9} (2018), 631--657.

\bibitem{Goldwasser:2012}
J.~Goldwasser and J.~Talbot, \emph{Vertex {R}amsey problems in the hypercube},
  SIAM Journal on Discrete Mathematics \textbf{26} (2012), 838--853.

\bibitem{G:1985}
A.~W. Goodman, \emph{Triangles in a complete chromatic graph with three
  colors}, Discrete Mathematics \textbf{57} (1985), no.~3, 225--235.

\bibitem{Hirst:2014}
J.~Hirst, \emph{The inducibility of graphs on four vertices}, Journal of Graph
  Theory \textbf{75} (2014), no.~3, 231--243.

\bibitem{Johnson:2010}
J.~R. Johnson and J.~Talbot, \emph{Vertex turan problems in the hypercube},
  Journal of Combinatorial Theory Series A \textbf{117} (2010), 454--465.

\bibitem{Johnson:1989cy}
K.~A. Johnson and R.~Entringer, \emph{Largest induced subgraphs of the n-cube
  that contain no 4-cycles}, Journal of Combinatorial Theory Series B
  \textbf{46} (1989), no.~3, 346--355.

\bibitem{kostochka1976piercing}
E.~A. Kostochka, \emph{Piercing the edges of the n-dimensional unit cube},
  Diskret. Analiz Vyp \textbf{28} (1976), no.~55-64, 223.

\bibitem{PG:1975}
N.~Pippenger and M.~C. Golumbic, \emph{The inducibility of graphs}, Journal of
  Combinatorial Theory Series B \textbf{19} (1975), 189--203.

\bibitem{Rahman}
M.~S. Rahman, M.~Kaykobad, and Md.~T. Kaykobad, \emph{Bipartite graphs,
  hamiltonicity and $\mathscr{Z}$ graphs}, Electronic Notes in Discrete
  Mathematics \textbf{44} (2013), 307--312.

\bibitem{V:2013}
E.~Vaughan, \emph{Flagmatic: a tool for researchers in extremal graph theory
  (version 2.0)}, 2013, http://flagmatic.org/graph.html.

\end{thebibliography}
\end{document}